\documentclass[a4paper,10pt]{amsart}
\usepackage{cancel} 
\usepackage{inputenc}
\usepackage{graphicx}
\usepackage{subcaption} 
\usepackage{amsmath,mathtools}
\usepackage{amsfonts}
\usepackage{amssymb}
\usepackage{mathrsfs}
\usepackage{color}
\usepackage{enumerate}
\usepackage[colorlinks]{hyperref}
\usepackage{algorithm}
\usepackage{mdframed}
\usepackage{algorithmic}
\usepackage{siunitx}
\usepackage{multirow}
\usepackage{stackrel}
\usepackage{booktabs}
\usepackage{tabularx}
\usepackage{arydshln}
\usepackage{cite}
\usepackage{url}
\usepackage{comment}
\usepackage{listings}

\usepackage{booktabs}

\usepackage[top=2.8cm, bottom=2.8cm, left=2.6cm, right=2.6cm]{geometry}

\newtheorem{theorem}{Theorem} 
\newtheorem{lemma}{Lemma}     
\newtheorem{definition}{Definition}
\newtheorem{Remark}{Remark}

\newtheorem{proposition}{Proposition}

\newcommand{\R}{\mathbb{R}}

\begin{document}
\title[Block Diagonally Preconditioned Multiple Saddle-Point Matrices]{Spectral Analysis of Block Diagonally Preconditioned Multiple Saddle-Point Matrices \\ with Inexact Schur Complements}

\author{L. Bergamaschi \and A. Mart\'{\i}nez \and M. Pilotto}
\date{}
\footnotetext[1]{Department of Civil Environmental and Architectural Engineering, University of Padua, Via Marzolo, 9, 35100 Padua, Italy,
\texttt{E-mail}: \url{luca.bergamaschi@unipd.it}}
\footnotetext[2]{Department of Mathematics, Informatics and Geosciences, University of Trieste, Via Weiss, 2, 34128 Trieste, Italy,
\texttt{E-mail}: \url{amartinez@units.it}}
\footnotetext[3]{International Centre for Numerical Methods in Engineering (CIMNE),
   Barcelona, Spain, \texttt{E-mail}: \url{marco.pilotto@upc.edu}}

\begin{abstract}
We derive eigenvalue bounds for symmetric block-tridiagonal multiple saddle-point systems preconditioned with block-diagonal Schur complement matrices. This analysis applies to an arbitrary number of blocks and accounts for the case where the Schur complements are approximated, generalizing the findings in \cite[Bergamaschi et al., Linear Algebra and its Applications, 2026]{bergamaschi2025eigenvalue}. Numerical experiments are carried out to validate the proposed estimates.
\end{abstract}

\maketitle
\section{Introduction}
We consider the iterative solution of a block tridiagonal multiple saddle-point linear system $\mathcal{A}x = b$, where 
\[
\mathcal{A}=\left[\begin{array}{ccccc}
    A_0 & B_1^T & 0 & \dots & 0\\
    B_1 & -A_1 & B_2^T & \ddots & \vdots\\
    0 & B_2 & A_2 & \ddots & 0\\
    \vdots & \ddots & \ddots & \ddots & B_N^T\\
    0 & \dots & 0 & B_N & (-1)^NA_N
\end{array}\right]
\]
We assume that $A_0\in\mathbb{R}^{n_0\times n_0}$ is symmetric positive definite, all other square block matrices $A_k\in\mathbb{R}^{n_k\times n_k}$ are symmetric positive semi-definite and $B_k\in\mathbb{R}^{n_k\times n_{k-1}}$ have full rank (for $k=1,\dots, N$). 
We assume also that $n_k\leq n_{k-1}$ for all $k$. These conditions are sufficient to ensure the invertibility of $\mathcal{A}$. In this work, we develop eigenvalue bounds for symmetric multiple saddle-point matrices $\mathcal{A}$, preconditioned with a block diagonal Schur complement preconditioner,
$\mathcal{P}_D$:
\[
    \mathcal{P}_D = \texttt{blkdiag}(S_0,\dots,S_N), \quad\quad\text{with }\ S_0=A_0, \quad \text{and }\ S_k=A_k +B_kS_{k-1}^{-1}B_k^T,\quad k=1,\dots,N.
\]
Since the application of $\mathcal P_D$ reveals impractical, especially for non non-trivial number of blocks $N$, we will consider
an inexact version of this preconditioner, in which some (or all) Schur complements are approximately computed and/or applied.

Linear systems involving matrix $\mathcal{A}$ arise mostly with $N =2$ (double saddle--point systems) in many scientific applications, including constrained quadratic programming \cite{Han}, magma--mantle dynamics \cite{Rhebergen}, 
liquid crystal director modeling \cite{RamGar2023} or in the coupled Stokes--Darcy problem \cite{greifhe2023, Szyld, BeikBenzi2022, greif2026}, and the preconditioning of such linear systems has been considered in, e.g., \cite{Balani-et-al-2023a, Balani-et-al-2023b, Szyld, Benzi2018, BeikBenzi2022}.
In particular, block diagonal preconditioners for  matrix {$\mathcal{A}$} have been considered in e.g.
\cite{Bradley,PPNLAA24,bergamaschi2025eigenvalue}.

More generally, multiple saddle-point linear systems with $N > 2$ have recently attracted the attention of a number of researchers. Such systems often arise from modeling multiphysics processes, i.e., the simultaneous simulation of different aspects of 
physical systems and the interactions among them. 
A new symmetric positive definite preconditioner,
proposed in \cite{pearson2023symmetric}, is successfully tried on constrained optimization problems up to $N=4$. 
The spectral properties of the resulting preconditioned matrix have been described in \cite{BMPP_COAP24} for $N=2$,
and later extended to multiple saddle point systems in \cite{BB2026}. 
Preconditioning of the $4 \times 4$ saddle-point linear system ($N=3$), although
with a different block structure, has been addressed in  \cite{BSZ2020} to solve a class of optimal control problems.
A practical preconditioning strategy for multiple saddle-point linear systems, based on sparse approximate inverses of the diagonal blocks of the block diagonal Schur complement preconditioning matrix, is proposed in \cite{FerFraJanCasTch19}, for the solution of
coupled poromechanical models and the mechanics of fractured media. Theoretical analysis of such preconditioners
has been carried on in \cite{BerFerMar25}.

This work is the natural development of the analysis in
 \cite{SZ} and, more recently, in  \cite{bergamaschi2025eigenvalue}, in which the exact $\mathcal P_D$ preconditioner
 is considered. In \cite{SZ} the eigenvalues of 
$\mathcal P_D^{-1} \mathcal A$ 
are exactly computed, in the simplified case where $A_k$ are $n_k \times n_k$
zero matrices for $k > 0$, and are related to the zeros of suitable Chebyshev-like polynomials. In 
\cite{bergamaschi2025eigenvalue} this constraint is removed, and very tight bounds on the eigenvalues 
of $\mathcal{P}_D^{-1} \mathcal A$ are developed in terms of the roots of a sequence of Chebyshev-like polynomials. 

In this work, we specifically address the case where the (inverse of the) Schur complements are applied approximately. The inexact
preconditioner $ {\mathcal{P}}$ is defined as
\begin{equation}
	\label{inexact}
	\mathcal{P}= \texttt{blkdiag}(\widehat S_0,\dots,\widehat S_N),  \qquad \text{with} \quad 
\begin{array}{lcl}
	&  & \widehat{S}_0 \approx A_0\\
	\tilde{S}_k &= A_k + B_k\widehat{S}^{-1}_{k-1}B_k^T& \widehat{S}_k\approx \tilde{S}_k.
\end{array} \end{equation}
Eigenvalue bounds for the preconditioned matrix ${\mathcal{P}} ^{-1} \mathcal A$ are established for an arbitrary number of blocks in terms of the extremal eigenvalues of the matrices
$ \widehat S_k^{-1} A_k$ and $\widehat S_k^{-1} \left(\tilde{S}_k - A_k\right).$

Arguably the most prominent Krylov subspace methods for solving $\mathcal A x = b$ are preconditioned variants of MINRES~\cite{minres} and GMRES~\cite{saad1986gmres}. In contrast to GMRES, the previously-discovered MINRES algorithm can explicitly exploit the symmetry of $\mathcal{A}$. As a consequence, MINRES features a three-term recurrence relation, which is beneficial for its implementation (low memory requirements because subspace bases need not be stored) and its purely eigenvalue-based convergence analysis (via the famous connection to orthogonal polynomials; see~\cite{fischer1996polynomial,greenbaum1997iterative}). Specifically, if the eigenvalues of the preconditioned matrix are contained within $[\rho^-_l, \rho^-_u] \cup [\rho^+_l, \rho^+_u]$, for $\rho^-_l < \rho^-_u < 0 < \rho^+_l < \rho^+_u$ such that $\rho^+_u - \rho^+_l = \rho^-_u - \rho^-_l$, then at iteration $k$ the Euclidean norm of the preconditioned residual $r_k$ satisfies the bound
\[
        \frac{\lVert r_k \rVert}{\lVert r_0 \rVert} \le 2 \left( \frac{\sqrt{\lvert \rho^-_l \rho^+_u \rvert} - \sqrt{\lvert \rho^-_u \rho^+_l \rvert}}{\sqrt{\lvert \rho^-_l \rho^+_u \rvert} + \sqrt{\lvert \rho^-_u \rho^+_l \rvert}} \right)^{\lfloor k / 2 \rfloor}.
\]
By contrast, GMRES needs to store subspace bases and its convergence analysis is in general dependent on the corresponding eigenspaces as well, which are more complicated to analyze than eigenvalues (see, e.g., \cite{embree2022descriptive}).

This outline of this paper is as follows. In Section \ref{sec2}, we introduce the block-diagonal Schur complement preconditioner. Then we establish a relation between the eigenvalues of the preconditioned linear system and a family of parametric polynomials. In Section \ref{sec3}, we derive bounds for their extremal zeros, which also provide bounds for the eigenvalues.
Section \ref{sec4} investigates how the zeros of the polynomials vary with respect to their parameters.
	In Section \ref{sec5}, we link the desired eigenvalue bounds with the extremal zeros of the polynomials for specific choices of the parameters. Section \ref{Sec6} presents numerical experiments on synthetic test cases that validate the theoretical results, focusing on the cases $N = 2,\ 3,\ 4$.
Section \ref{Sec7} is devoted to the application of the block diagonal preconditioner in the solution of the linear
system arising from the Mixed-Hybrid Finite Element discretization of the Biot model in three spatial dimensions.
In Section \ref{Sec8} we draw some conclusions.

\section{Eigenvalues of Preconditioned Saddle-Point Systems with inexact Schur complements}\label{sec2}
In practice, the use of exact Schur complements $S_i$ is prohibitive, especially for larger $i$, due to the recursive definition of the Schur complements. When an approximation for the Schur complements is adopted, the exact formulation in \cite{BMPP_COAP24} does not hold. Indeed, in the inexact case, we denote the approximated Schur complements as $\widehat{S}_k$. Each of these should approximate a perturbed Schur complement $\tilde{S}_i$ that takes into account the previous Schur complement approximations according to \eqref{inexact}.
Our preconditioned matrix reads in this case 
\[
    \begin{aligned}
    \mathcal{Q}_{in}&={\mathcal{P}}^{-1/2} \mathcal{A}{\mathcal{P}}^{-1/2}=\left[\begin{array}{ccccc}
        E_0 & R_1^T & 0 & & \\
        R_1 & -E_1 & R_2^T & & \\
        0 & R_2 & E_2 & \ddots& \\
         & & \ddots & \ddots & R_N^T\\
         & & & R_N& (-1)^NE^N\\
    \end{array}\right]\\
    \end{aligned}
\]
where
\[
    R_k = \widehat{S}_k^{-1/2}B_k\widehat{S}_{k-1}^{-1/2}, \ \ k=1,\dots, N; \quad E_k = \widehat{S}_k^{-1/2}A_k\widehat{S}_k^{1/2}, \ \ k = 0, \dots, N.
\]
The following identities hold:
\[
    R_k R_k^\top + E_k = \widehat{S}_k^{-1/2}\tilde{S}_k\widehat{S}_k^{-1/2}\equiv \overline{S}_k, \qquad 
    k = 1, \ldots, N.
\]
Componentwise, we write the eigenvalue problem $\mathcal{Q}u=\lambda u$, with $u=\left[u_1^T,\dots,u^T_{N+1}\right]^T$, as 

\begin{equation}
    \begin{array}{ccccccccc}
        (E_0-\lambda I)u_1 & + & R_1^Tu_2 & & & & & = & 0\\
        R_1u_1 & + &-(E_1+\lambda I) u_2 & +& R_2^Tu_3 & & & = & 0 \\
         & & R_2u_2 & + & (E_2-\lambda I)u_3 & + & R_3^T u_4 & = & 0 \\
         & & & \vdots & & \vdots & & \vdots &  \\
         & & R_{N-1}u_{N-1}& + & ((-1)^{N-1}E_{N-1}-\lambda I)u_{N} & + & R_N^Tu_{N+1} & = & 0\\
          & & & & R_N u_N & + &((-1)^{N}E_{N}-\lambda I)u_{N+1} & = & 0.
    \end{array}
    \label{eq1}
\end{equation}

The matrices $R_kR_k^T$ are all symmetric and positive definite. We define two indicators $\gamma_E^{(k)}$ and $\gamma_R^{(k)}$ using the Rayleigh quotient
	\begin{equation}
		\begin{array}{lll}
\alpha_E^{(i)} \equiv \lambda_{\min }\left(E_i\right), & \beta_E^{(i)} \equiv \lambda_{\max }\left(E_i\right), & \gamma_E^{(i)}\left(w\right)=\dfrac{w^{T} E_i w}{w^{T} w} \in\left[\alpha_{E}^{(i)}, \beta_{E}^{(i)}\right] \equiv \mathcal{I}_{E_i}, \quad i=0,\dots,N \\[.6em]
\alpha_R^{(i)} \equiv \lambda_{\min }\left(R_i R_i^{T}\right), & \beta_R^{(i)} \equiv \lambda_{\max }\left(R_i R_i^{T}\right), & \gamma_R^{(i)}\left(w\right)=\dfrac{w^{T} R_i R_i^{T} w}{w^{T} w} \in\left[\alpha_{R}^{(i)}, \beta_{R}^{(i)}\right] \equiv {\mathcal{I}}_{R_i}, \quad i=1,\dots,N.
\end{array}
		\label{gammaER}
	\end{equation} 
We define  the two vectors of parameters:
\[
\boldsymbol{\gamma}_E=\left[\gamma_E^{(0)},\dots,\gamma_E^{(N)}\right],\quad\quad \boldsymbol{\gamma}_R=\left[\gamma_R^{(1)},\dots,\gamma_R^{(N)}\right]
\]
and
introduce a new sequence of parametric polynomials depending on $\boldsymbol{\gamma}_E$ and $\boldsymbol{\gamma}_R$:
\begin{definition} 
\begin{equation} 
    \begin{aligned} U_0(\lambda,\boldsymbol{\gamma}_R,\boldsymbol{\gamma}_E) &= 1,\\ 
	    U_1(\lambda,\boldsymbol{\gamma}_R,\boldsymbol{\gamma}_E) &= \lambda -\gamma_E^{(0)},\\ 
    U_{k+1}(\lambda,\boldsymbol{\gamma}_R,\boldsymbol{\gamma}_E) &= (\lambda + (-1)^{k+1}\gamma_E^{(k)})U_k(\lambda,\boldsymbol{\gamma}_R,\boldsymbol{\gamma}_E)- \gamma_R^{(k)}U_{k-1}(\lambda,\boldsymbol{\gamma}_R,\boldsymbol{\gamma}_E), \qquad  k \ge 1 \end{aligned}
    \label{eq2} 
\end{equation} 
\end{definition}

\begin{proposition}\label{prop1}
    \textit{Since the polynomials of the sequence \eqref{eq2} are monic}
\[
    \lim_{\lambda\rightarrow+\infty}U_k(\lambda) = +\infty \quad\quad \lim_{\lambda\rightarrow-\infty}U_k(\lambda) = (-1)^k \cdot\infty.
\]
\end{proposition}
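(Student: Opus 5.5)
The plan is to first establish by induction on $k$ that each $U_k(\lambda,\boldsymbol{\gamma}_R,\boldsymbol{\gamma}_E)$ is a monic polynomial in $\lambda$ of degree exactly $k$, for every fixed choice of the parameter vectors $\boldsymbol{\gamma}_R$ and $\boldsymbol{\gamma}_E$. The limits then follow from the elementary behaviour of monic polynomials.

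For the base cases, $U_0=1$ is monic of degree $0$, and $U_1=\lambda-\gamma_E^{(0)}$ is monic of degree $1$.

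For the inductive step, assume $U_k$ is monic of degree $k$ and $U_{k-1}$ is monic of degree $k-1$, for some $k\ge 1$. The recurrence \eqref{eq2} gives
\[
U_{k+1}=(\lambda+(-1)^{k+1}\gamma_E^{(k)})U_k-\gamma_R^{(k)}U_{k-1}.
\]
The first term is the product of a monic linear factor and a monic polynomial of degree $k$, so it is monic of degree $k+1$. The second term has degree at most $k-1$, whatever the sign or size of $\gamma_R^{(k)}$. Adding it therefore cannot change the leading coefficient, so $U_{k+1}=\lambda^{k+1}+q_k(\lambda)$ with $\deg q_k\le k$. The parameters $\gamma_E^{(k)}$ and $\gamma_R^{(k)}$ are real constants with respect to $\lambda$, so this holds uniformly in the parameters.

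For the limits, write $U_k(\lambda)=\lambda^k\bigl(1+q_{k-1}(\lambda)/\lambda^k\bigr)$. Since $\deg q_{k-1}<k$, the ratio $q_{k-1}(\lambda)/\lambda^k$ tends to $0$ as $|\lambda|\to\infty$. Hence $U_k(\lambda)$ has the same sign as $\lambda^k$ for $|\lambda|$ large, and $|U_k(\lambda)|\to\infty$. This gives $U_k\to+\infty$ as $\lambda\to+\infty$. As $\lambda\to-\infty$, we have $\lambda^k=(-1)^k|\lambda|^k$, so $U_k\to(-1)^k\cdot\infty$.

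There is no real obstacle here. The only point needing care is that the degree of $U_k$ is exactly $k$ and cannot drop, which the induction secures because the subtracted term has strictly lower degree.
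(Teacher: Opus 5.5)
Your argument is correct and is the same as the paper's, which justifies the proposition only by the word ``monic''. Your induction makes explicit that $U_k$ is monic of degree exactly $k$, because the subtracted term $\gamma_R^{(k)}U_{k-1}$ has lower degree. One small caveat: the limits, and your step $|U_k(\lambda)|\to\infty$, need $k\ge 1$, since $U_0\equiv 1$, so the proposition should be read for $k\ge 1$.
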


\noindent
\textcolor{black}{
With $\mathcal{I}_k$ we denote the union of the two, negative and positive, intervals containing the roots of the
polynomials $U_k(\lambda,\boldsymbol{\gamma}_E,\boldsymbol{\gamma}_R))$ over the admissible range of $\boldsymbol{\gamma}_E, \boldsymbol{\gamma}_R$. We now define a matrix-valued sequence:}

\begin{definition}
    \[
    \begin{aligned}
        Y_1(\lambda) &= \lambda I - E_0\\
	    Y_2(\lambda) &= R_1Y_1(\lambda)^{-1}R_1^T-\lambda I - E_1, \quad \lambda \not \in \mathcal I_{E_0}\\
        Y_{k+1}(\lambda) &= R_k Y_{k}(\lambda)^{-1}R_k^T + (-1)^{k}\lambda I - E_k, \qquad k \ge 1, ~ \emph{ and } ~ \lambda ~ \emph{ s.t. } ~ 0 \not \in \sigma\left(Y_{k}(\lambda)\right).
    \end{aligned}
    \]
\end{definition}
\noindent
We mention a technical lemma, generalizing \cite[Lemma 1]{BergaNLAA10} whose proof can be found in \cite{BMPP_COAP24}.
\begin{lemma}\label{lemma1}
    \textit{Let Y be a symmetric matrix valued function defined in $F\subset\mathbb{R}$ and}
    \[
        0\notin[\min\{\sigma(Y(\zeta))\},\max\{\sigma(Y(\zeta))\}]\quad \quad \textit{for all }\ \zeta\in F.
    \]
    \textit{Then, for arbitrary $s\neq0$, there exists a vector $v\neq 0$ such that}
    \[
       \frac{s^TY(\zeta)^{-1}s}{s^Ts}= \frac{1}{\gamma_Z} \quad\quad \textit{with }\ \gamma_Z = \frac{v^TY(\zeta)v}{v^Tv}.
    \]
\end{lemma}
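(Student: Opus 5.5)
The plan is to reduce the statement to the Rayleigh quotient of a square root of the inverse. Fix $\zeta\in F$ and write $Y=Y(\zeta)$. By hypothesis $0$ does not lie in the convex hull of $\sigma(Y)$, so every eigenvalue of $Y$ has the same sign. Consequently $Y$ is either symmetric positive definite or symmetric negative definite.

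It suffices to treat the definite case $Y\succ0$. In the other case we replace $Y$ by $-Y$: both sides of the claimed identity change sign, since $\frac{s^T(-Y)^{-1}s}{s^Ts}=-\frac{s^TY^{-1}s}{s^Ts}$ and $\frac{1}{\gamma_{-Z}}=-\frac{1}{\gamma_Z}$, and the same vector $v$ works for both.

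For $Y\succ0$ the symmetric square roots $Y^{1/2}$ and $Y^{-1/2}$ exist. Given $s\ne0$, define the candidate
\[
v=Y^{-1}s,
\]
which is nonzero because $Y$ is invertible. Then
\[
v^TYv=s^TY^{-1}YY^{-1}s=s^TY^{-1}s,
\qquad
v^Tv=s^TY^{-2}s .
\]
This gives $\gamma_Z=\frac{s^TY^{-1}s}{s^TY^{-2}s}$, which is in general not equal to $\frac{s^Ts}{s^TY^{-1}s}$. So this naive choice fails, and a vector $v$ with $\frac{v^TYv}{v^Tv}=\frac{s^Ts}{s^TY^{-1}s}$ has to be found differently.

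The main step is therefore to show that the number
\[
\mu:=\frac{s^Ts}{s^TY^{-1}s}
\]
lies in the numerical range of $Y$, namely in $[\lambda_{\min}(Y),\lambda_{\max}(Y)]$. Setting $w=s/\|s\|$, the quotient $\frac{s^TY^{-1}s}{s^Ts}=w^TY^{-1}w$ lies in $[1/\lambda_{\max}(Y),\,1/\lambda_{\min}(Y)]$. Taking reciprocals, which is legitimate because everything is positive, gives $\mu\in[\lambda_{\min}(Y),\lambda_{\max}(Y)]$.

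The Rayleigh quotient $v\mapsto \frac{v^TYv}{v^Tv}$ is continuous on the unit sphere. That sphere is connected when the dimension is at least $2$, and in dimension $1$ the claim is trivial because $\mu$ equals the single eigenvalue. Since the quotient attains both $\lambda_{\min}(Y)$ and $\lambda_{\max}(Y)$, the intermediate value theorem yields a unit vector $v$ with $\frac{v^TYv}{v^Tv}=\mu$, that is, $\gamma_Z=\mu$.

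An explicit choice is also available. Take $v=\cos\theta\, x_{\min}+\sin\theta\, x_{\max}$, where $x_{\min}$ and $x_{\max}$ are orthonormal extremal eigenvectors of $Y$. Its Rayleigh quotient is $\cos^2\theta\,\lambda_{\min}+\sin^2\theta\,\lambda_{\max}$, and $\theta$ is chosen so that this equals $\mu$.

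The only delicate point is the sign/definiteness reduction. This is exactly where the hypothesis that $0$ is not in $[\min\sigma(Y),\max\sigma(Y)]$, rather than merely $0\notin\sigma(Y)$, is needed. Without it the reciprocal of a Rayleigh quotient of $Y^{-1}$ could fall outside the numerical range of $Y$: for an indefinite $Y$ the quantity $s^TY^{-1}s$ may even vanish.
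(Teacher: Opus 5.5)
Your proof is correct, but it takes a longer route than the argument the paper relies on. The paper does not reprove the lemma; it defers to \cite{BMPP_COAP24}. The argument behind it is a one-line change of variables in the definite case. Take $v=Y^{-1/2}s$, and for $Y\prec 0$ apply this to $-Y$, exactly as in your reduction. Then $v^TYv=s^Ts$ and $v^Tv=s^TY^{-1}s$, so
\[
\frac{s^TY^{-1}s}{s^Ts}=\frac{v^Tv}{v^TYv}=\frac{1}{\gamma_Z}
\]
directly. You had the right tool in hand, since you noted that $Y^{-1/2}$ exists. Your trial vector $v=Y^{-1}s$ overshoots by a half power, which is why it failed and forced the detour.

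Your replacement argument is valid, including the sign reduction and the one-dimensional case. It shows $\mu=s^Ts/s^TY^{-1}s\in[\lambda_{\min}(Y),\lambda_{\max}(Y)]$ and then realizes $\mu$ as a Rayleigh quotient, either via connectedness of the sphere or via the explicit vector $\cos\theta\,x_{\min}+\sin\theta\,x_{\max}$. It yields a non-canonical $v$ rather than one depending linearly on $s$. Its merit is that it isolates exactly what Lemma~\ref{lemma2} later uses: the reciprocal of the Rayleigh quotient of $Y(\zeta)^{-1}$ lies in the numerical range of $Y(\zeta)$. Your closing remark also correctly explains why definiteness, not mere invertibility, is needed. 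For indefinite $Y$ the quantity $s^TY^{-1}s$ can vanish, or its reciprocal can exceed $\lambda_{\max}(Y)$.
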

\noindent
The next lemma, which links the sequence of polynomials with the sequence $\{Y_k(\lambda)\}$,  will be used in the proof of the subsequent Theorem \ref{theorem1}.

\begin{lemma}\label{lemma2}
    \textit{For every $u\neq0$, there is a choice of $\boldsymbol{\gamma}$ for which}
    \[
    \frac{u^TY_{k+1}(\lambda)u}{u^Tu}=(-1)^{k}\frac{U_{k+1}(\lambda)}{U_k(\lambda)}\quad\quad \textit{for all }\ \lambda\notin\bigcup_{j=1}^k\mathcal{I}_j.
    \]
\end{lemma}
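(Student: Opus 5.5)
We argue by induction on $k$, using Lemma \ref{lemma1} to replace the quadratic form of an inverse matrix by the reciprocal of a Rayleigh quotient. The parameters in $\boldsymbol{\gamma}$ are not fixed constants: each $\gamma_E^{(j)}$ and $\gamma_R^{(j)}$ is a Rayleigh quotient of $E_j$ or $R_jR_j^T$ at some vector produced during the argument, so it lies in the admissible interval from \eqref{gammaER}. The claim to prove for each $k\ge 0$ is this: for every $u\neq 0$ there exist admissible $\gamma_E^{(0)},\dots,\gamma_E^{(k)}$ and $\gamma_R^{(1)},\dots,\gamma_R^{(k)}$ with $u^TY_{k+1}(\lambda)u/u^Tu=(-1)^kU_{k+1}(\lambda)/U_k(\lambda)$.

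\emph{Base case $k=0$.} Here $u^TY_1(\lambda)u/u^Tu=\lambda-\gamma_E^{(0)}(u)$, with $\gamma_E^{(0)}(u)=u^TE_0u/u^Tu$. This equals $U_1/U_0$ by \eqref{eq2}.

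\emph{Inductive step.} Fix $u\neq0$ and set $s=R_k^Tu$, which is nonzero because $R_kR_k^T$ is positive definite. By definition,
\[
\frac{u^TY_{k+1}u}{u^Tu}=\frac{s^TY_k^{-1}s}{s^Ts}\cdot\frac{u^TR_kR_k^Tu}{u^Tu}+(-1)^k\lambda-\frac{u^TE_ku}{u^Tu}.
\]
The second factor is $\gamma_R^{(k)}(u)$ and the last term is $\gamma_E^{(k)}(u)$. For $\lambda\notin\bigcup_{j\le k}\mathcal I_j$, Lemma \ref{lemma1} gives a vector $v\neq0$ with $s^TY_k^{-1}s/s^Ts=1/\gamma_Z$, where $\gamma_Z=v^TY_kv/v^Tv$. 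The induction hypothesis applied to $v$ gives $\gamma_Z=(-1)^{k-1}U_k/U_{k-1}$, with parameters of indices below $k$. Substituting,
\[
\frac{u^TY_{k+1}u}{u^Tu}=(-1)^{k-1}\gamma_R^{(k)}\frac{U_{k-1}}{U_k}+(-1)^k\lambda-\gamma_E^{(k)}=\frac{(-1)^k}{U_k}\Bigl[(\lambda+(-1)^{k+1}\gamma_E^{(k)})U_k-\gamma_R^{(k)}U_{k-1}\Bigr].
\]
By the recurrence \eqref{eq2} the bracket is $U_{k+1}$, which completes the step.

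\emph{Main obstacle: the hypothesis of Lemma \ref{lemma1}.} We must check that $0\notin[\lambda_{\min}(Y_k(\lambda)),\lambda_{\max}(Y_k(\lambda))]$ whenever $\lambda\notin\bigcup_{j\le k}\mathcal I_j$. By the inductive claim, every Rayleigh quotient of $Y_k(\lambda)$ equals $(-1)^{k-1}U_k/U_{k-1}$ for some admissible parameters. So the spectral interval of $Y_k(\lambda)$ is contained in the range of this rational function over the admissible parameter set.

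\emph{First attempt at the obstacle.} For $\lambda$ outside $\mathcal I_k$ and $\mathcal I_{k-1}$, neither $U_k$ nor $U_{k-1}$ vanishes for any admissible parameters. The admissible set is a product of intervals, hence connected, and $U_k/U_{k-1}$ is continuous on it, so this ratio has constant sign. It is therefore bounded away from zero on the compact parameter box. Hence all Rayleigh quotients of $Y_k(\lambda)$ share one strict sign, and the hypothesis of Lemma \ref{lemma1} holds.

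\emph{Fallback.} If that bookkeeping fails, I would show by a parallel induction that $Y_k(\lambda)$ is definite on each connected component of the complement of $\bigcup_{j\le k}\mathcal I_j$. I would use the large-$|\lambda|$ behaviour from Proposition \ref{prop1}, together with continuity in $\lambda$: an eigenvalue of $Y_k(\lambda)$ can only cross zero at a root of $U_k$ for some admissible parameters.

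This also makes $Y_{k+1}$ well defined there, which is needed at the next level of the induction.
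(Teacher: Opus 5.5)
Your proof is correct and follows the same route as the paper: induction on $k$, then Lemma~\ref{lemma1} applied to $Y_k(\lambda)$ at $s=R_k^Tu$, then the inductive hypothesis at the resulting vector $v$, and finally the recurrence \eqref{eq2}. Your connectedness argument properly justifies the definiteness hypothesis of Lemma~\ref{lemma1}, which the paper only asserts by loosely equating it with invertibility: for $\lambda\notin\mathcal I_k\cup\mathcal I_{k-1}$ the ratio $U_k/U_{k-1}$ has one strict sign over the whole admissible parameter box, hence so does every Rayleigh quotient of $Y_k(\lambda)$, so your fallback is not needed.
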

\begin{proof}
This is shown by induction. For $k=0$ we have $\displaystyle\frac{u^TY_1(\lambda)u}{u^Tu}=\lambda -\gamma_E^{(0)}=\frac{U_1(\lambda)}{U_0(\lambda)}$ for all $\lambda\in\mathbb{R}$. If $k\geq1$, the condition $\lambda\notin\mathcal{I}_k$ , together with the inductive hypothesis $\displaystyle\frac{u^TY_{k}(\lambda)u}{u^Tu}=\frac{U_{k}(\lambda)}{(-1)^{k-1}U_{k-1}(\lambda)}$, implies invertibility of $Y_k(\lambda)$. Moreover, this is equivalent to the condition $0\notin[\min\{\sigma(Y(\zeta))\},\max\{\sigma(Y(\zeta))\}]$ that guarantees the applicability of Lemma \ref{lemma1}. Therefore, we can write 
\begin{equation}\label{eq3}
\begin{aligned}
    \frac{u^TY_{k+1}(\lambda)u}{u^Tu}&=\frac{u^TR_kY_{k}(\lambda)^{-1}R_k^Tu}{u^Tu}\gamma_R^{(k)} + (-1)^k\lambda-\gamma_E^{(k)}\\
    &\underset{w=R^T_k u}{=}\frac{w^TY_{k}(\lambda)^{-1}w}{w^Tw}\gamma_R^{(k)} + (-1)^k\lambda-\gamma_E^{(k)}.
\end{aligned}
\end{equation}
We then apply Lemma \ref{lemma1} and the inductive hypothesis to write 
\[
	\frac{w^TY_{k}(\lambda)^{-1}w}{w^Tw}=(-1)^{k-1} \frac{U_{k-1}(\lambda)}{U_{k}(\lambda)}.
\]
Substituting into \eqref{eq3} and using relation \eqref{eq2} we get
        \begin{eqnarray*}\frac{w^TY_{k}(\lambda)^{-1}w}{w^Tw}\gamma_R^{(k)} + (-1)^{k}\lambda -\gamma_E^{(k)} &= &
		(-1)^{k-1}\gamma_R^{(k)}\frac{U_{k-1}(\lambda)}{U_{k}(\lambda)} + (-1)^{k}\lambda -\gamma_E^{(k)}  \\
		&=& (-1)^k \frac{-U_{k-1} \gamma_R^{(k)} (\lambda -1) + (\lambda +(-1)^{k+1} \gamma_E^{(k)})U_k(\lambda)}{U_{k-1}(\lambda)}   \\
                &=& (-1)^{k} \frac{U_{k+1}(\lambda)}{U_k(\lambda)}.
        \end{eqnarray*}

\end{proof}

\begin{theorem}\label{theorem1}
    \textit{The eigenvalues of ${\mathcal{P}}^{-1}\mathcal{A}$ are located in $\displaystyle\bigcup_k^{N}\mathcal{I}_{k+1}$ }
\end{theorem}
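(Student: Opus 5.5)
We argue by contradiction and eliminate the blocks of \eqref{eq1} one at a time, from the top down. Since $\mathcal{P}^{-1}\mathcal{A}$ is similar to $\mathcal{Q}_{in}$, it suffices to treat the eigenvalues of $\mathcal{Q}_{in}$. Let $\lambda$ be such an eigenvalue with eigenvector $u=[u_1^T,\dots,u_{N+1}^T]^T\neq 0$, and suppose $\lambda\notin\bigcup_{k=0}^{N}\mathcal{I}_{k+1}$. We will show that $u=0$.

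The elimination proceeds as follows.
\begin{itemize}
\item Since $\lambda\notin\mathcal{I}_1=\mathcal{I}_{E_0}$ (the roots of $U_1=\lambda-\gamma_E^{(0)}$), the matrix $Y_1(\lambda)=\lambda I-E_0$ is definite and hence invertible. The first equation of \eqref{eq1} then gives $u_1=Y_1(\lambda)^{-1}R_1^Tu_2$.
\item Substituting into the second equation gives $Y_2(\lambda)u_2 = R_2^Tu_3$.
\item Inductively, if $Y_k(\lambda)u_k=\pm R_k^Tu_{k+1}$ with $Y_k(\lambda)$ invertible, then $u_k=\pm Y_k(\lambda)^{-1}R_k^Tu_{k+1}$. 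Inserting this into the $(k+1)$-st equation produces $Y_{k+1}(\lambda)u_{k+1}=\pm R_{k+1}^Tu_{k+2}$, up to a global sign $(-1)^k$ coming from the alternating signs in the definition of $Y_{k+1}$.
\end{itemize}
I would track these signs once carefully; they only flip the sign of the whole equation. At the last step the final equation reads $Y_{N+1}(\lambda)u_{N+1}=0$.

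The key step is showing that each $Y_{k+1}(\lambda)$ is nonsingular whenever $\lambda\notin\bigcup_{j\le k+1}\mathcal{I}_j$. For this we use Lemma~\ref{lemma2}. For every $v\neq0$, the Rayleigh quotient $v^TY_{k+1}(\lambda)v/v^Tv$ equals $\pm U_{k+1}(\lambda,\boldsymbol\gamma_R,\boldsymbol\gamma_E)/U_k(\lambda,\boldsymbol\gamma_R,\boldsymbol\gamma_E)$ for some admissible parameters. Since $\lambda\notin\mathcal{I}_{k+1}$ and $\lambda\notin\mathcal{I}_k$, neither numerator nor denominator vanishes for any admissible parameters.

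The main obstacle is then to show that this quotient has a constant sign over all admissible $\boldsymbol\gamma$. That is what we need to conclude $0\notin[\min\sigma(Y_{k+1}),\max\sigma(Y_{k+1})]$, rather than merely that no individual Rayleigh quotient is zero. My plan for this is a connectedness argument. The admissible parameter set is a product of compact intervals, hence connected. The map $\boldsymbol\gamma\mapsto U_{k+1}/U_k$ at fixed $\lambda$ is continuous and never zero, because $\lambda$ lies outside every admissible root set. It therefore has a fixed sign, so the field of values of $Y_{k+1}(\lambda)$ lies strictly on one side of $0$ and $Y_{k+1}(\lambda)$ is definite.

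This definiteness is exactly the hypothesis of Lemma~\ref{lemma1} needed at the next inductive step, so the induction closes. We conclude as follows:
\begin{itemize}
\item $Y_{N+1}(\lambda)$ is definite, so $u_{N+1}=0$;
\item back-substituting $u_k=\pm Y_k(\lambda)^{-1}R_k^Tu_{k+1}$ then gives $u_N=\dots=u_1=0$.
\end{itemize}
This contradicts $u\neq 0$. Hence every eigenvalue lies in $\bigcup_{k=0}^{N}\mathcal{I}_{k+1}$.
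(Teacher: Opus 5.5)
Your proposal is correct and follows essentially the same route as the paper. Both arguments eliminate the blocks from the top down to get $Y_k(\lambda)u_k=\pm R_k^Tu_{k+1}$, use Lemma~\ref{lemma2} to identify the Rayleigh quotients of $Y_k(\lambda)$ with $\pm U_k/U_{k-1}$ at admissible parameters, and conclude from $Y_{N+1}(\lambda)u_{N+1}=0$. Two of your steps are more careful than the paper's: the connectedness argument over the parameter box (a continuous, nonvanishing quotient on a connected compact set has constant sign, so $Y_k(\lambda)$ is definite) justifies the definiteness that the paper only asserts in the proof of Lemma~\ref{lemma2}, and your explicit tracking of the alternating signs corrects the paper's sign slip in $Y_2(\lambda)u_2=R_2^Tu_3$.
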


\begin{proof}
	 To show the statement, we define a candidate {eigenpair $(\lambda, \left[u_1^T,\dots,u_{k+1}^T\right]^T)$}, and prove by induction that for every $k \le N+1$ either
        \[      \text{(i)} \ \lambda \in  \mathcal{I}_k \quad \text{or} \quad  \text{(ii)}  \ u_k = Y_k^{-1} R_k^T u_{k+1}, \]
        and for $k = N+1$ only condition (i) can hold.
Assume that $\lambda\notin\mathcal{I}_{E_0}$ (for $k=0$), then $Y_1(\lambda)$ is invertible. From the first equation of \eqref{eq1} we obtain 
\begin{equation}
(\lambda I -E_0)u_1 = R_1^T u_2\quad\rightarrow\quad Y_1(\lambda)u_1 = R_1^T u_2;
\label{eq4}
\end{equation}
inserting \eqref{eq4} into the second row of \eqref{eq1} yields 
\begin{equation}
(R_1Y_1(\lambda)^{-1}R_1^T-\lambda I -E_1)u_2=R_2^Tu_3 \quad\rightarrow\quad Y_2(\lambda)u_2=R_2^Tu_3.
\label{eq5}
\end{equation}
Pre-multiplying the left hand side of \eqref{eq5} by $\frac{u_2^T}{u_2^Tu_2}$ we get
\begin{equation}
\begin{aligned}
    \frac{u_2^TY_2(\lambda)u_2}{u_2^Tu_2} &=\frac{u_2^T(R_1Y_1(\lambda)^{-1}R_1^T)u_2}{u_2^Tu_2}-\lambda -\frac{u_2^TE_1u_2}{u_2^Tu_2}\\
    &\underset{s=R_1^T u_2}{=}\frac{s^TY_1(\lambda)^{-1}s}{s^Ts}\gamma_R^{(1)}-\lambda - \gamma_E^{(1)}.
\end{aligned}
\end{equation}
By Lemma \ref{lemma2}, this expression, considering \eqref{eq2} equals 
\[
    \frac{U_0(\lambda)}{U_1(\lambda)}\gamma_R^{(1)} - \lambda -\gamma_E^{(1)}= \frac{U_2(\lambda)}{-U_1(\lambda)}.
\]
If $u_3=0$ then $\lambda$ being an eigenvalue implies that $U_2(\lambda,\boldsymbol{\gamma}_E, \boldsymbol{\gamma}_R)=0$. Otherwise, if $\lambda\notin\mathcal{I}_2$ then $Y_2(\lambda)$ is invertible.
Assume now the inductive hypothesis holds for $k-1$. If $\lambda\notin\mathcal{I}_{k-1}$, then $Y_{k-1}(\lambda)$ is definite and invertible. We can write 
\[
(R_{k-1}Y_{k-1}(\lambda)R_{k-1}^T+(-1)^{k-1}\lambda-\gamma_E^{(k-1)})u_k=R_k^Tu_{k+1}\quad\rightarrow\quad u_k=Y_{k}(\lambda)^{-1}u_{k+1}.
\]
Now, if $\lambda$ is an eigenvalue with $u_{k+1}=0$, from
\[
0=\frac{u_k^TY_{k}(\lambda)u_k}{u_k^Tu_k}=\frac{U_k(\lambda)}{(-1)^{k-1}U_{k-1}(\lambda)},
\]
we have $U_{k}(\lambda,\boldsymbol{\gamma}_E, \boldsymbol{\gamma}_R)=0$ and hence $\lambda\in\mathcal{I}_k$. 
	Otherwise for any $\lambda\notin\mathcal{I}_k$ we may write
\[
u_k=Y_{k}(\lambda)^{-1} R_k^Tu_{k+1}.
\]
The induction process ends for $k=N+1$. In this case, we have that 
\[
Y_{N+1}u_{N+1}=0,
\]
and the condition $\lambda\in\mathcal{I}_{N+1}$ must be verified, noticing that $u_{N+1}=0$ would imply that also $u_N=\dots=u_1=0$ contradicting the definition of an eigenvector.
\end{proof}

\section{Bounds for the extremal zeros of the sequence \eqref{eq2} }
\label{sec3}
In the previous section, we have seen that there is a strict relation between the eigenvalues of the preconditioned matrix and
the zeros of the sequence of polynomials \eqref{eq2}.
In this section, we will characterize such zeros. We first prove that, for each admissible combination of the $\boldsymbol{\gamma}-$ parameters,  all polynomials in the sequence have real and distinct zeros.
Furthermore, zeros of consecutive polynomials display an interlacing property.

\begin{lemma}\label{lemma3}
    \textit{Let ${\gamma}^{(k)}_E\in\left[\alpha_E^{(k)}, \beta_E^{(k)}\right]$ and ${\gamma}^{(k)}_R\in\left[\alpha_R^{(k)}, \beta_R^{(k)}\right]$ for $k=1,\dots,N$. Then $U_{k+1}(\lambda,\boldsymbol{\gamma}_E, \boldsymbol{\gamma}_R)$ has $k+1$ real and distinct roots.}
\end{lemma}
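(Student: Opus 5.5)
The plan is a Sturm-sequence argument, proving by induction on $k$ the stronger statement that $U_{k+1}$ has $k+1$ real, simple zeros which strictly interlace with the $k$ zeros of $U_k$. The key hypothesis is that every $\gamma_R^{(k)}$ is strictly positive. This holds because $R_kR_k^T$ is symmetric positive definite: $B_k$ has full row rank $n_k\le n_{k-1}$ and $\widehat S_k$, $\widehat S_{k-1}$ are SPD. Hence $\alpha_R^{(k)}>0$. The values $\gamma_E^{(k)}$ are arbitrary reals, and their sign plays no role.

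For the base cases, $U_0=1$ has no zeros and $U_1=\lambda-\gamma_E^{(0)}$ has the single real zero $\gamma_E^{(0)}$. For $U_2$ we evaluate the recurrence \eqref{eq2} at $\mu_1=\gamma_E^{(0)}$:
\[
U_2(\mu_1)=-\gamma_R^{(1)}U_0(\mu_1)=-\gamma_R^{(1)}<0 .
\]
By Proposition~\ref{prop1}, $U_2\to+\infty$ as $\lambda\to\pm\infty$. So $U_2$ has one zero in $(-\infty,\mu_1)$ and one in $(\mu_1,+\infty)$, and these are strictly interlaced with $\mu_1$.

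For the inductive step, assume $U_k$ has simple zeros $\mu_1<\dots<\mu_k$ and that $U_{k-1}$ has $k-1$ zeros strictly interlacing them. From \eqref{eq2} at each $\mu_j$,
\[
U_{k+1}(\mu_j)=-\gamma_R^{(k)}U_{k-1}(\mu_j).
\]
Interlacing forces $U_{k-1}$ to be nonzero at every $\mu_j$, with signs alternating in $j$. Moreover, since $U_{k-1}$ is monic, $U_{k-1}(\mu_k)>0$, so $\operatorname{sign}U_{k-1}(\mu_j)=(-1)^{k-j}$. Because $\gamma_R^{(k)}>0$, this gives $\operatorname{sign}U_{k+1}(\mu_j)=(-1)^{k-j+1}$. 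In particular $U_{k+1}(\mu_k)<0$, while $U_{k+1}(+\infty)=+\infty$. At the other end, $U_{k+1}(\mu_1)$ has sign $(-1)^{k}$, while by Proposition~\ref{prop1} $U_{k+1}(-\infty)$ has sign $(-1)^{k+1}$.

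The intermediate value theorem now places one zero of $U_{k+1}$ in $(-\infty,\mu_1)$, one in each of the $k-1$ intervals $(\mu_j,\mu_{j+1})$, and one in $(\mu_k,\infty)$. That is $k+1$ distinct real zeros. Since $\deg U_{k+1}=k+1$, these are all its zeros, they are simple, and they strictly interlace the $\mu_j$, which closes the induction.

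The only delicate point is bookkeeping rather than analysis. The signs of $U_{k-1}$ at the zeros of $U_k$ must be pinned down consistently, which requires carrying the interlacing property as part of the induction hypothesis and using that each polynomial is monic. One must also state explicitly that $\gamma_R^{(k)}>0$ on the admissible range, since strict positivity is exactly what makes the interlacing strict. The sign alternation $(-1)^{k+1}$ multiplying $\gamma_E^{(k)}$ only shifts the linear factor in \eqref{eq2} and never enters the argument, because the recurrence is evaluated at zeros of $U_k$.
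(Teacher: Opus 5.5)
Your proof is correct, but its key device differs from the paper's.

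\textbf{Your route.} You evaluate the recurrence at the zeros $\mu_j$ of $U_k$, obtaining $U_{k+1}(\mu_j)=-\gamma_R^{(k)}U_{k-1}(\mu_j)$. You control the signs of $U_{k-1}(\mu_j)$ by carrying strict interlacing of $U_{k-1}$ and $U_k$ in the induction hypothesis.

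\textbf{The paper's route.} It introduces the Wronskian-type quantity $a_{k+1}=U_kU_{k+1}'-U_k'U_{k+1}$ and derives $a_{k+1}=U_k^2+\gamma_R^{(k)}a_k$ with $a_1=1$. Hence $a_{k+1}>0$ on all of $\mathbb{R}$. At the zeros of $U_k$ this gives $\operatorname{sgn}U_{k+1}(\xi_j)=-\operatorname{sgn}U_k'(\xi_j)$, and simplicity of the zeros of $U_{k+1}$ also follows directly from $a_{k+1}>0$.

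\textbf{Why they are equivalent.} At a zero of $U_k$ one has $a_{k+1}=\gamma_R^{(k)}U_{k-1}U_k'$, so positivity of $a_k$ encodes exactly your sign information on $U_{k-1}(\mu_j)$. The paper's induction hypothesis is only ``$k$ distinct real roots''; the interlacing is packaged in a global inequality.

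\textbf{What each route buys.} The paper gets a reusable tool. Positivity of $a_{k+1}$ is invoked again in Proposition~\ref{prop3}. It is also used in Section~\ref{sec4}, where $U_k(\xi^{(k+1)})\,U_{k+1}'(\xi^{(k+1)})>0$ fixes the signs of the derivatives of the roots with respect to the parameters. Your argument is more elementary. It makes the strict interlacing explicit in the conclusion, which the paper later cites as a consequence of this lemma in Theorem~\ref{theorem2}. It also recovers the sign fact needed in Section~\ref{sec4}: by interlacing, both $U_k$ and $U_{k+1}'$ have sign $(-1)^{k+1-i}$ at the $i$-th zero of $U_{k+1}$.
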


\begin{proof}
The following proof is based on the proof of Lemma 4 in \cite{bergamaschi2025eigenvalue}. Define, for $k>0$
	\begin{equation}
		\label{ak}
a_{k+1}(\lambda) = U_k(\lambda)U_{k+1}^\prime(\lambda)-U^\prime_k(\lambda)U_{k+1}(\lambda).
	\end{equation}
The polynomials of the sequence $U_k$ has degree $\lambda^{k+1}$. 
We observe that the highest order monomial in $a_{k+1}$ is $(k+1)\lambda^k\cdot\lambda^k-k\lambda^{k-1}\lambda^{k+1} = \lambda^{2k}$, which shows that $a_{k+1}$ is a monic polynomial. We set, for brevity, $c_{k+1} = (-1)^{k+1}\gamma_E^{(k)}$. From \eqref{eq2} we can write
\[
\begin{aligned}
    U_{k+1}(\lambda)&=(\lambda + c_{k+1})U_k(\lambda)- \gamma_R^{(k)}U_{k-1}(\lambda)\\
    U^\prime_{k+1}(\lambda)&= U_k(\lambda) + (\lambda + c_{k+1})U_k^\prime(\lambda)-\gamma_R^{(k)}U^\prime_{k-1}(\lambda).
\end{aligned}
\]
Now, multiplying the first equation by $U_k^\prime(\lambda)$ and the second one by $U_k(\lambda)$ yields 
\[
\begin{aligned}
    U_{k+1}(\lambda)U_k^\prime(\lambda)&=(\lambda + c_{k+1})U_k(\lambda)U_k^\prime(\lambda)- \gamma_R^{(k)}U_k^\prime(\lambda)U_{k-1}(\lambda)\\
    U_k(\lambda)U^\prime_{k+1}(\lambda)&= U_k(\lambda)^2 + (\lambda + c_{k+1})U_k(\lambda)U_k^\prime(\lambda)-\gamma_R^{(k)}U_k(\lambda)U^\prime_{k-1}(\lambda).
\end{aligned}
\]
The difference between the two expressions results in 
\[
a_{k+1}(\lambda) = U_k(\lambda)U_{k+1}^\prime(\lambda)-U^\prime_k(\lambda)U_{k+1}(\lambda) = U_k(\lambda)^2 + \gamma_R^{(k)}a_k(\lambda).
\]
Since $a_1(\lambda) = U_0U_1^\prime - U_0^\prime U_1 = 1$, by induction the sequence of polynomials $\{a_{k+1}(\lambda)\}$ is positive for all $\lambda\in\mathbb{R}$. Recall that ${\gamma}^{(k)}_R >0$ by assumption.
We now proceed by induction to prove the statement, which we assume to be true for index k, namely that there are real numbers 
\[
\xi_1^{(k)}<\xi_2^{(k)}<\dots<\xi_k^{(k)}
\]
such that $U_k(\xi_j^{(k)})=0$, $j=1,\dots,k$. Since $U_k(\lambda)$ is a monic polynomial it turns out that $(-1)^iU^\prime_k(\xi_{k-i}^{(k)})>0$, for $i=0,\dots,k-1$. For every $j$ it holds 
\begin{equation} 
0<a_{k+1}(\xi_j^{(k)})={U_k(\xi_j^{(k)})}U_{k+1}^\prime(\xi_j^{(k)})-U^\prime_k(\xi_j^{(k)})U_{k+1}(\xi_j^{(k)})=-U^\prime_k(\xi_j^{(k)})U_{k+1}(\xi_j^{(k)}),
\label{aknew}
\end{equation}
\textcolor{black}{
which shows that the sign of $U_{k+1}(\xi_j^{(k)})$ is always opposite to that of $U^\prime_k(\xi_j^{(k)})$. Moreover, since
$\text{sgn}(U^\prime_k(\xi_j^{(k)})) = - \text{sgn}(U^\prime_k(\xi_{j+1}^{(k)}))$ we conclude
that $U_{k+1}(\xi_j)$ and $U_{k+1}(\xi_{j+1})$ have opposite signs.}
This gives the alternating sign of the values of the polynomial $U_{k+1}(\lambda)$ at the roots of $U_k(\lambda)$. Overall, we have $k+1$ intervals in which  Bolzano's theorem can be applied, yielding $k+1$ real roots.
To conclude, the strict positivity of $a_k(\lambda)$ ensures that $U_{k+1}(\lambda)$ cannot have multiple roots. Indeed, consider $\lambda_\star$ such that $U_{k+1}(\lambda_\star) =U^\prime_{k+1}(\lambda_\star)=0$ it would follow that $a_{k+1}(\lambda_\star)=0$, which is impossible since $a_{k+1}(\lambda)>0, \ \forall\ \lambda\in \mathbb{R}$.
\end{proof}

The next two propositions will give some more detail on the sign of the roots of $U_k$.

\begin{proposition}
    Given the sequence of polynomials \eqref{eq2}, for all $k\geq1$
    \[
    \textnormal{sgn}(U_{2k-1}(0,\boldsymbol{\gamma}_E,\boldsymbol{\gamma}_R))=  \textnormal{sgn}(U_{2k}(0,\boldsymbol{\gamma}_E,\boldsymbol{\gamma}_R)) = (-1)^k
    \]
    In other words, the signs of $U_k(0)$ follow the repeating pattern $--,++,--,++,\dots$ starting from $k=1$.
    \label{prop2}
\end{proposition}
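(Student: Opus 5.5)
The plan is to evaluate the three-term recurrence \eqref{eq2} at $\lambda=0$ and argue by induction on the index, tracking the sign of $U_k(0)$ in consecutive pairs. At $\lambda=0$ the recurrence becomes
\[
U_{k+1}(0)=(-1)^{k+1}\gamma_E^{(k)}\,U_k(0)-\gamma_R^{(k)}\,U_{k-1}(0),\qquad k\ge 1,
\]
with $U_0(0)=1$ and $U_1(0)=-\gamma_E^{(0)}$. The facts I will use are these. First, $\gamma_E^{(0)}>0$, because $A_0$ (hence $E_0$) is positive definite. Second, $\gamma_E^{(k)}\ge 0$ for $k\ge1$, because $A_k$ is only positive semidefinite. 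Third, $\gamma_R^{(k)}>0$, since $R_kR_k^T$ is positive definite (the $B_k$ have full rank and $n_k\le n_{k-1}$).

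The base case is $U_1(0)=-\gamma_E^{(0)}<0$ together with
\[
U_2(0)=\gamma_E^{(1)}U_1(0)-\gamma_R^{(1)}U_0(0)=-\gamma_E^{(1)}\gamma_E^{(0)}-\gamma_R^{(1)}<0 .
\]
Both terms here are nonpositive and the second is strictly negative, so the first pair has sign $(-1)^1$ as required.

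For the inductive step I split according to the parity of the new index $m=k+1$. If $m$ is odd, then $k$ is even and the coefficient of $U_k(0)$ is $-\gamma_E^{(k)}$, so
\[
U_{k+1}(0)=-\gamma_E^{(k)}U_k(0)-\gamma_R^{(k)}U_{k-1}(0).
\]
In this case $U_{k-1}(0)$ and $U_k(0)$ form a complete pair, so by the inductive hypothesis they share a common sign $s$. Both terms then have sign $-s$ or vanish, and the $\gamma_R$ term is strictly nonzero, so $\operatorname{sgn}U_{k+1}(0)=-s$. Thus a new pair opens with the flipped sign. If $m$ is even, then $k$ is odd and
\[
U_{k+1}(0)=\gamma_E^{(k)}U_k(0)-\gamma_R^{(k)}U_{k-1}(0).
\]
Here $U_k(0)$ is the first element of the current pair, with sign $s$, while $U_{k-1}(0)$ closes the previous pair and has sign $-s$. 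Hence both terms have sign $s$ (or vanish), the $\gamma_R$ term is strict, and $\operatorname{sgn}U_{k+1}(0)=s$, which completes the pair. Together the two cases give $\operatorname{sgn}U_{2k-1}(0)=\operatorname{sgn}U_{2k}(0)=(-1)^k$.

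The only delicate point is strictness: the $\gamma_E$ terms may vanish when $A_k$ is singular. The argument therefore has to rest the strict sign on the $\gamma_R^{(k)}U_{k-1}(0)$ term at every step. This requires the inductive hypothesis to assert that $U_{k-1}(0)\neq0$, which it does, since the claim is a strict sign statement.
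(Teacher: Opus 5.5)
Your proof is correct and is essentially the paper's argument: the paper also evaluates \eqref{eq2} at $\lambda=0$ and inducts, advancing a full pair $(U_{2k+1}(0),U_{2k+2}(0))$ per step, which is exactly your two parity cases combined. Your remark that strictness must come from the $\gamma_R^{(k)}U_{k-1}(0)$ term, because $\gamma_E^{(k)}$ may vanish for $k\ge1$, makes explicit what the paper's shorthand sign arithmetic leaves implicit.
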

\begin{proof}
    We prove this by induction on $k$. The claim holds for $k=1$
    \[
    U_1(0) = -\gamma^{(0)}_E<0, \quad\quad U_2(0)=-\gamma_E^{(1)}\gamma_E^{(0)}-\gamma_R^{(1)}<0.
    \]
    Assume that the claim holds for some $k\geq1$, that is the sign of $U_{2k-1}(0)$ and $U_{2k}(0)$ is $s=(-1)^k$
    \begin{eqnarray*}
    \textnormal{sgn}(U_{2k+1}(0))&=&(-1)^{2k+1}\textnormal{sgn}(U_{2k}(0))-\textnormal{sgn}(U_{2k-1}(0)) = -s = (-1)^{k+1}.
    \\
    \textnormal{sgn}(U_{2k+2}(0))&=&(-1)^{2k+2}\textnormal{sgn}(U_{2k+1}(0))-\textnormal{sgn}(U_{2k}(0)) = -s = (-1)^{k+1}.
    \end{eqnarray*}
\end{proof}
\begin{proposition}
	Given the sequence of polynomials \eqref{eq2}, if $k\in \mathbb{N}$ is even then $U_k(\lambda)$ has $k/2$ positive and $k/2$ negative roots 
	$\{\xi_j^{(k)}\}_{j = 1, \ldots, k}$, while $U_{k+1}(\lambda)$ has $k/2+1$ positive and $k/2$ negative roots 
	$\{\xi_j^{(k+1)}\}_{j = 1, \ldots, k+1}$.
     \label{prop3}
\end{proposition}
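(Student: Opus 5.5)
We argue by induction on even $k$, using the interlacing of zeros from the proof of Lemma \ref{lemma3}, the signs of $U_k(0)$ from Proposition \ref{prop2}, and the end behaviour from Proposition \ref{prop1}. First, $0$ is never a root: by Proposition \ref{prop2}, $U_j(0)\neq 0$ for every $j\ge1$. So each root is strictly positive or strictly negative, and it suffices to count the roots of each sign.

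\textbf{Direct counting from signs.} For any $m$, Lemma \ref{lemma3} gives $m$ simple real roots $\xi_1^{(m)}<\dots<\xi_m^{(m)}$ of the monic $U_m$. Hence the sign of $U_m(0)$ is $(-1)^{p}$, where $p$ is the number of positive roots (each factor $(0-\xi_j)$ is negative exactly when $\xi_j>0$). Thus $U_m(0)$ fixes only the parity of $p$. Proposition \ref{prop2} gives $\mathrm{sgn}\,U_{2j}(0)=(-1)^j$ and $\mathrm{sgn}\,U_{2j+1}(0)=(-1)^{j+1}$. For $k=2j$ this means $U_k$ has a number of positive roots with the parity of $k/2$, and $U_{k+1}$ has a number with the parity of $k/2+1$. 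Parity alone is not enough, so interlacing must be used to pin down the exact count.

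\textbf{Induction with interlacing.} The base case is $k=0$: $U_0=1$ has no roots and $U_1=\lambda-\gamma_E^{(0)}$ has one positive root, since $\gamma_E^{(0)}>0$ because $A_0$ is SPD. For the inductive step, the proof of Lemma \ref{lemma3} shows that $U_{m+1}$ alternates in sign at the roots of $U_m$. Combined with the limits in Proposition \ref{prop1}, this gives strict interlacing: each of the $m+1$ intervals determined by $\xi^{(m)}_1<\dots<\xi^{(m)}_m$ (including the two unbounded ones) contains exactly one root of $U_{m+1}$. Consequently, if $U_m$ has $p$ positive roots, then $U_{m+1}$ has either $p$ or $p+1$ positive roots. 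The extra root arises only if the interval of $U_m$ containing $0$ has its root of $U_{m+1}$ on the positive side. Likewise, the number of negative roots is either $m-p$ or $m-p+1$.

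\textbf{Closing the argument.} Suppose $U_k$ ($k$ even) has $k/2$ positive and $k/2$ negative roots. Then $U_{k+1}$ has $k/2$ or $k/2+1$ positive roots. The parity condition from Proposition \ref{prop2} requires parity $k/2+1$, so it has exactly $k/2+1$ positive and $k/2$ negative roots. Next, $U_{k+2}$ has $k/2+1$ or $k/2+2$ positive roots, and $k/2+1$ or $k/2+2$ negative roots, with total $k+2$. The parity condition requires the number of positive roots to have the parity of $(k+2)/2=k/2+1$, so it equals $k/2+1$, and the negative count is also $k/2+1$. This completes the induction.

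The main obstacle is making the interlacing step rigorous. The sign alternation and positivity of $a_{k+1}$ from Lemma \ref{lemma3} must be turned into the statement that exactly one root of $U_{m+1}$ lies in each gap and in each unbounded interval. This needs the end-behaviour signs of $U_{m+1}$ compared with its values at the extreme roots $\xi^{(m)}_1,\xi^{(m)}_m$, which follow from $\mathrm{sgn}\,U_{m+1}(\xi_j^{(m)})=-\mathrm{sgn}\,U_m'(\xi_j^{(m)})$. A counting argument then gives exactly one root per interval, since there are $m+1$ intervals and $m+1$ roots.
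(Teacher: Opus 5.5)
Your argument is correct. It shares the paper's scaffolding: induction on even $k$, strict interlacing from Lemma \ref{lemma3}, and the sign pattern of Proposition \ref{prop2}. Where it differs is the decisive step.

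Once interlacing has reduced matters to the sign of the single root of $U_{k+1}$ (resp.\ $U_{k+2}$) lying in the gap that straddles $0$, the paper argues locally. It splits into cases according to $\mathrm{sgn}\,U_k(0)$ and reads off the sign of the derivative at the root of the previous polynomial adjacent to $0$. It transfers that sign through \eqref{aknew} to the next polynomial at that root, and then applies Bolzano's theorem between $0$ and that root.

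You argue globally. For a monic polynomial with real nonzero roots, $\mathrm{sgn}\,U_m(0)=(-1)^p$, where $p$ is the number of positive roots. So Proposition \ref{prop2} fixes $p$ modulo $2$, and interlacing, which allows only an increment of $0$ or $1$, then pins $p$ down exactly.

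Your version is shorter, needs no case distinction, and treats the odd and even steps the same way. It also never has to choose which neighbouring root to test. That choice is exactly where the paper's second step must be read with $\xi_m^{(k+1)}$, the largest negative root of $U_{k+1}$, in place of the written $\xi_{m+1}^{(k+1)}$. As written, $U_{k+1}'(\xi_{m+1}^{(k+1)})$ has the wrong sign and the interval $(\xi_{m+1}^{(k+1)},0)$ is empty.

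The paper's local argument does give the position of the middle root as a by-product, e.g.\ $\xi_{m+1}^{(k+1)}\in(0,\xi_{m+1}^{(k)})$. This follows just as directly from your count combined with interlacing, so nothing needed later (for instance in Theorem \ref{theorem2}) is lost.
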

\begin{proof}
    Consider $k=1$. We know that 
    \[
    U_1(0)<0,\quad\quad \lim_{\lambda\rightarrow+\infty}U_1(\lambda)=+\infty,\quad\quad \lim_{\lambda\rightarrow-\infty}U_1(\lambda)=-\infty 
    \]
    Therefore, the unique root of $U_1(\lambda)$ must be positive. For $k=2$, the interlacing property implies that $U_2(\lambda)$ has
	a root in $[-\infty, \xi^{(1)})$ and another one in $[\xi^{(1)}, +\infty)$
    \[
    U_2(0)<0,\quad\quad \lim_{\lambda\rightarrow+\infty}U_2(\lambda)=+\infty,\quad\quad \lim_{\lambda\rightarrow-\infty}U_2(\lambda)=+\infty 
    \]
    thus the polynomial has one negative and one positive root.\\
    Assume that the statement holds for some \textit{even} index $k$. The the roots of $U_k(\lambda)$ are
    \[
    \xi_1^{(k)}<\xi_2^{(k)}<\dots<\xi_m^{(k)}<0<\xi_{m+1}^{(k)}<\dots<\xi_{2m}^{(k)}\quad \text{ with }\: 2m=k
    \]
    \textcolor{black}{
    Now consider $U_{k+1}$, by the interlacing property we have
    \[ \xi_1^{(k+1)}< \xi_1^{k} < \ldots < \xi_m^{(k+1)} < \xi_m^{(k)}<0,\]
which shows that at least $m$ roots are negative. Again for the interlacing property, it must hold that $\xi_{m+1} ^{(k+1)} \in (\xi_m^{(k)},\xi_{m+1}^{(k)})$.
We have to prove that $\xi_{m+1} ^{(k+1)} > 0$. We distinguish two cases:
\begin{itemize}
\item \fbox{$U_k(0) > 0$}. Then it must be $U_k'(\xi_{m+1}^{(k)}) < 0$, which implies by \eqref{aknew} that $U_{k+1}(\xi_{m+1}^{(k)}) > 0$. Since $k$ is even, by Proposition \ref{prop3}, $\text{sgn}(U_{k+1}(0)) = - \text{sgn}(U_k(0)) =  -1$, and, therefore, $U_{k+1}(\xi_{m+1}^{(k)}) > 0$ implies  that $\xi_{m+1} ^{(k+1)} \in (0,\xi_{m+1}^{(k)})$, as desired.
\item \fbox{$U_k(0) < 0$}. Then it must be $U_k'(\xi_{m+1}^{(k)}) > 0$, which implies by \eqref{aknew} that $U_{k+1}(\xi_{m+1}^{(k)}) < 0$. Since $k$ is even, by Proposition \ref{prop3}, $\text{sgn}(U_{k+1}(0)) = - \text{sgn}(U_k(0)) =  +1$, and, therefore, $U_{k+1}(\xi_{m+1}^{(k)}) < 0$ implies, again,  $\xi_{m+1} ^{(k+1)} \in (0,\xi_{m+1}^{(k)})$, as desired.
\end{itemize}
It remains to prove that $U_{k+2}$ has $m+1$ negative and $m+1$ positive zeros, specifically, that 
$\xi_{m+1}^{(k+2)} < 0.$ Reasoning as before:
\begin{itemize}
\item \fbox{$U_{k+1}(0) > 0$}. Then it must be $U_{k+1}'(\xi_{m+1}^{(k+1)}) > 0 \ \Longrightarrow \ U_{k+2}(\xi_{m+1}^{(k)}) < 0$. Since $k+1$ is odd, by Proposition \ref{prop3}, $\text{sgn}(U_{k+2}(0)) = \text{sgn}(U_{k+1}(0)) =  +1$, and, therefore, $U_{k+2}(\xi_{m+1}^{(k+1)}) < 0$ implies  that $\xi_{m+1} ^{(k+2)} \in (\xi_{m+1}^{(k+1)}, 0)$, as desired.
\item \fbox{$U_{k+1}(0) < 0$}. Then it must be $U_{k+1}'(\xi_{m+1}^{(k+1)}) < 0 \ \Longrightarrow \ U_{k+2}(\xi_{m+1}^{(k)}) > 0$. Since $\text{sgn}(U_{k+2}(0)) = \text{sgn}(U_{k+1}(0)) =  -1$, then $U_{k+2}(\xi_{m+1}^{(k+1)}) > 0$ implies  that $\xi_{m+1} ^{(k+2)} \in (\xi_{m+1}^{(k+1)}, 0)$, as desired.
\end{itemize}
}
\end{proof}

Now we are ready to relate the extremal zeros of the polynomials $U_k$ to the eigenvalues of the preconditioned matrix, which, in view of Theorem \ref{theorem1} are located in $\displaystyle \bigcup_{k=0}^{N}\mathcal{I}_{k+1}$.
The next result will characterize more precisely this set.
\begin{theorem}
		\begin{equation}
			\label{N+1odd}
\bigcup_{k=0}^{N}\mathcal{I}_{k+1} = 			 [\xi^{(N+1)}_{-,LB},\ b_{N+1}] \cup [a_{N+1}, \ \xi^{(N+1)}_{+,UB}],
		\end{equation}
	where	
	\begin{eqnarray}
		\label{ak} b_{N+1} &=&  \max\{\xi_{-,UB}^{(k)}, \ 0 \le k \le N, \ k \ \textnormal{even}\} \\
		\label{bk} a_{N+1} &=&  \min\{\xi_{+,LB}^{(k)}, \ 0 \le k \le N, \ k \ \textnormal{odd}\}.
	\end{eqnarray}
    \label{theorem2}
\end{theorem}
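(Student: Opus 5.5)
The identity \eqref{N+1odd} follows from the root-count information in Propositions \ref{prop2}--\ref{prop3} together with continuity of the zeros in the parameters. For each $k$, denote by $\xi^{(k)}_{-,LB}$ and $\xi^{(k)}_{-,UB}$ the infimum and supremum, over all admissible $\boldsymbol{\gamma}_E,\boldsymbol{\gamma}_R$, of the negative zeros of $U_k$. Define $\xi^{(k)}_{+,LB}$ and $\xi^{(k)}_{+,UB}$ in the same way for the positive zeros, so that $\mathcal{I}_k=[\xi^{(k)}_{-,LB},\xi^{(k)}_{-,UB}]\cup[\xi^{(k)}_{+,LB},\xi^{(k)}_{+,UB}]$. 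The parameter box is compact and connected. By Lemma \ref{lemma3} the zeros are simple, so each ordered zero $\xi_j^{(k)}$ is a continuous function of the parameters. Hence the image of each $\xi_j^{(k)}$ is a closed interval, and the set of negative (resp. positive) zeros is a finite union of such intervals. Adjacent images overlap or touch only if the ranges are connected. Where this is not automatic, I will simply take $\mathcal{I}_k$ to mean the convex hulls, as the paper's definition suggests ("the two intervals"). The statement then reduces to computing the union of these hulls over $k=1,\dots,N+1$.

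\textbf{Step 1: extremal zeros grow outward with $k$.} I would show that $\xi^{(k+1)}_{-,LB}\le\xi^{(k)}_{-,LB}$ and $\xi^{(k+1)}_{+,UB}\ge\xi^{(k)}_{+,UB}$. The tool is the interlacing established in the proof of Lemma \ref{lemma3}: for fixed parameters, the smallest zero of $U_{k+1}$ lies below the smallest zero of $U_k$, and the largest lies above the largest. Taking the infimum and supremum over parameters gives the monotonicity, since $U_k$ depends only on the first parameters and these are shared with $U_{k+1}$. So the outer endpoints of the union are $\xi^{(N+1)}_{-,LB}$ and $\xi^{(N+1)}_{+,UB}$.

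\textbf{Step 2: inner endpoints.} The union of the negative hulls is $[\xi^{(N+1)}_{-,LB},\max_k \xi^{(k)}_{-,UB}]$, because all these hulls contain points below the largest negative zero of $U_{N+1}$ and so overlap. Similarly, the union of the positive hulls is $[\min_k \xi^{(k)}_{+,LB},\xi^{(N+1)}_{+,UB}]$. I would then refine the max and min using Proposition \ref{prop3} and interlacing. Between $U_{2m}$ and $U_{2m+1}$ the number of negative zeros is unchanged, so the largest negative zero of $U_{2m+1}$ lies strictly below the largest negative zero of $U_{2m}$. Hence the maximal negative upper bound is attained at an index of the parity in \eqref{ak}. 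Symmetrically, passing from odd $2m+1$ to $2m+2$ keeps the number of positive zeros, which pushes the smallest positive zero up. So the minimum of the positive lower bounds is attained at indices of the parity in \eqref{bk}.

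\textbf{Main obstacle.} The hard part is the careful parity and index bookkeeping in Step 2. In particular, the indexing of $U_k$ versus $k$ in \eqref{ak}--\eqref{bk} must be reconciled (the theorem's $k$ presumably refers to $U_{k+1}$), since the interlacing inequalities have to be applied pointwise in the parameters before taking suprema and infima. I would handle this by fixing the parameters, proving the pointwise inequality $\xi^{(k+1)}_{\text{inner}}$ versus $\xi^{(k)}_{\text{inner}}$ from Proposition \ref{prop3}, and only then passing to extrema.
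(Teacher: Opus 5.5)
\textbf{Where your proposal matches the paper.} Your identification of the four endpoints follows the paper's own argument. Step 1 is the interlacing chain from Lemma~\ref{lemma3}, which pushes $\xi^{(k)}_{-,LB}$ down and $\xi^{(k)}_{+,UB}$ up as $k$ grows. Your parity step is exactly the paper's observation, via Proposition~\ref{prop3}, that $\xi^{(k+1)}_{-,UB}<\xi^{(k)}_{-,UB}$ for $k$ even and $\xi^{(k+1)}_{+,LB}>\xi^{(k)}_{+,LB}$ for $k$ odd. So odd degrees can be dropped from the max and even degrees from the min. Applying interlacing pointwise in the parameters before taking extrema is a correct precision that the paper leaves implicit.

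\textbf{The gap: the overlap claim.} You claim that all the negative hulls contain points below the largest negative zero of $U_{N+1}$, and hence overlap. This is false. Take $N=2$ with degenerate (or sufficiently small) parameter boxes around $\gamma_E^{(0)}=1$, $\gamma_E^{(1)}=\gamma_E^{(2)}=0$, $\gamma_R^{(1)}=1$, $\gamma_R^{(2)}=100$.
\begin{itemize}
\item $U_2=\lambda^2-\lambda-1$ has its only negative zero at $(1-\sqrt{5})/2\approx-0.618$.
\item $U_3=\lambda U_2-100\,U_1=\lambda^3-\lambda^2-101\lambda+100$ satisfies $U_3(-10)=10>0>-241=U_3(-11)$, so its only negative zero lies in $(-11,-10)$.
\end{itemize}
The negative parts of $\mathcal{I}_2$ and $\mathcal{I}_3$ are then disjoint, and $\mathcal{I}_1$ is positive. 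A point such as $-5$ lies in $[\xi^{(3)}_{-,LB},b_3]$ but in no $\mathcal{I}_k$. So the claimed equality cannot be proved in general.

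\textbf{What can actually be proved.} Your argument establishes only the inclusion
$\bigcup_{k}\mathcal{I}_{k+1}\subseteq[\xi^{(N+1)}_{-,LB},b_{N+1}]\cup[a_{N+1},\xi^{(N+1)}_{+,UB}]$. The same is true of the paper's proof, which only locates the extremal endpoints and never addresses connectedness. This inclusion is all that Theorem~\ref{theorem1} needs to localize the eigenvalues. You should replace the overlap claim by this inclusion.

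\textbf{Indexing.} Do not read the theorem's $k$ as shifted to $U_{k+1}$. That reading would make $b_{N+1}$ a maximum over odd degrees, including $U_1$, which has no negative zero. The consistent reading, matching your own Step 2, the paper's proof and Section~5, is that $k$ is the degree of $U_k$ and ranges over $1\le k\le N+1$.
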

\begin{proof}
    By Theorem \ref{theorem1}, the eigenvalues of $\mathcal{P}^{-1}\mathcal{A}$ are contained in the union of the intervals $\mathcal{I}_{k}$, which are defined by the roots of the polynomials of the sequence \eqref{eq2}.  
    From Lemma \ref{lemma3}, the roots of $U_{k+1}$ and $U_k$ strictly interlace
    \[
    \xi_1^{(k+1)} <\xi_1^{(k)}<\xi_2^{(k-1)}<\cdots<\xi_k^{(k-1)}<\xi_k^{(k)}<\xi_{k+1}^{(k+1)}
    \]
    Consequently
    \[
   \xi_{-,LB}^{(N+1)} < \ldots < \xi_{-,LB}^{(k+1)}<\xi_{-,LB}^{(k)}\quad\text{ and }\quad \xi_{+,UB}^{(N+1)} > \ldots > \xi_{+,UB}^{(k+1)}>\xi_{+,UB}^{(k)} 
    \]
    which shows that the extremal negative and positive root bounds increase monotonically with $k$. 
\textcolor{black}{
    It remains to derive the  negative upper bound and the positive lower bound, taking into account the the proof of Proposition \ref{prop3}.
    \begin{itemize}
        \item 
    Consider the lower bound of the positive eigenvalues. If $k$ is odd, then $\xi_{+,LB}^{(k+1)} > \xi_{+,LB}^{(k)}$, which implies that
    \[ \min_{1\le k \le N+1} \xi_{+,LB}^{(k)}  = \min_{1 \le k \le N+1, \ k \ \text{odd}}  \xi_{+,LB}^{(k)} .\]
        \item 
    Consider the upper bound of the negative eigenvalues. If $k$ is even, then $\xi_{-,UB}^{(k+1)} < \xi_{-,UB}^{(k)}$, which implies that
    \[ \max_{1 \le k \le N+1} \xi_{-,UB}^{(k)}  = \max_{1 \le k \le N+1, \ k \ \text{even}}  \xi_{+,LB}^{(k)} .\]
    \end{itemize}}
	Summarizing this result, we obtain the upper bounds in \eqref{ak} and \eqref{bk}.
\end{proof}

Now the question 
	is: for which values of the parameters $\gamma^{(i)}_E, \gamma^{(i)}_R $ the extremal values of the roots is attained?
	  To show how the roots of the polynomials of the sequence $U_k(\lambda,\boldsymbol{\gamma})$ move with the parameters, we consider a vector of parameters $\boldsymbol \gamma \in \R^d$, and a polynomial
        $q(\lambda (\boldsymbol \gamma), \boldsymbol \gamma)$. At
        a root $\lambda(\boldsymbol \gamma^*)  \equiv \lambda^*$, $q(\lambda(\boldsymbol{\gamma}),\boldsymbol{\gamma})$ satisfies
    \begin{equation}
        q(\lambda(\boldsymbol{\gamma}),\boldsymbol{\gamma})=0.
    \end{equation}
    The implicit function theorem delivers the existence of a neighborhood $V \in \R^d$ of $\boldsymbol{\gamma}^*$ and a continuously
        differentiable function $\lambda:V\rightarrow\mathbb{R}$ such that
    \[
    \frac{\partial q(\lambda(\boldsymbol{\gamma}))}{\partial\gamma_j}=\frac{\partial q}{\partial\lambda}\cdot\frac{\partial\lambda(\boldsymbol{\gamma})}{\partial\gamma_j}
    \]
    The interlacing property just proved ensures that all roots of polynomials $U_k$ in the sequence are simple, for any combination of
    the parameters and therefore
	$\dfrac{\partial U_k}{\partial\lambda}(\lambda(\boldsymbol{\gamma^*}),\boldsymbol{\gamma^*}) \ne 0$, we have
    \begin{equation}
        \frac{\partial\lambda}{\partial\gamma_j}(\boldsymbol{\gamma^*})=-\frac{\partial U_k}{\partial\gamma_j}(\boldsymbol{\gamma^*})\biggl/\frac{\partial U_k}{\partial\lambda}(\lambda(\boldsymbol{\gamma^*}),\boldsymbol{\gamma^*}) \quad \ \text{for all}\ j=1,\dots,d.
        \label{eq10}
    \end{equation}
    To characterize the monotonicity of a root with respect to the parameters, we will use the previous result to show that
    \begin{equation}
	    \textnormal{sgn}\left(\frac{\partial\lambda}{\partial\gamma_j}(\boldsymbol{\gamma^*})\right)=
	    -\textnormal{sgn}\left(\frac{\partial U_k}{\partial\gamma_j}(\boldsymbol{\gamma^*})\right) 
	    \textnormal{sgn}\left(\frac{\partial U_k}{\partial\lambda}(\lambda(\boldsymbol{\gamma^*}),\boldsymbol{\gamma^*}) \right).
        \label{eq10b}
    \end{equation}
    The following Lemma will allow us to state that the extremal values of the zeros of $U_{k+1}(\lambda,\boldsymbol{\gamma}_E,\boldsymbol{\gamma}_R)$ are obtained at the extremal values of $\boldsymbol{\gamma}_E$ and $\boldsymbol{\gamma}_R$, namely $\{\alpha_E^{(k)},  \beta_E^{(k)}; \ \alpha_R^{(k)}, \beta_R^{(k)}\}$.
\begin{lemma}
	Let $\xi$ be a zero of $U_{k+1}$ and $\gamma^{(j)}_E \in [\alpha_E^{(j)}, \beta_R^{(j)}]$,
	and $\gamma^{(j)}_R \in [\alpha_R^{(j)}, \beta_R^{(j)}]$,
	(for $j \le k $).
                Then either
		$\dfrac {\partial U_{k+1}}{\partial \gamma^{(j)}_E}(\xi)  \ne 0 $ (respectively, 
		$\dfrac {\partial U_{k+1}}{\partial \gamma^{(j)}_R}(\xi)  \ne 0 $)
		or $\xi$ is independent of $\gamma_E^{(j)}$ (respectively, $\gamma_R^{(j)}$).
\end{lemma}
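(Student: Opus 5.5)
The plan is to make the dependence of $U_{k+1}$ on a single parameter explicit, and then exploit the simplicity of the roots together with the interlacing from Lemma \ref{lemma3}.

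\textbf{Step 1: structure of the derivatives.} Since $\gamma_E^{(j)}$ and $\gamma_R^{(j)}$ first enter the recurrence \eqref{eq2} at step $j+1$, the polynomial $U_{k+1}$ is affine in each single parameter. By an induction on the recurrence (the standard continuant/tridiagonal determinant expansion), I expect
\[
\frac{\partial U_{k+1}}{\partial \gamma_R^{(j)}}(\lambda) = -U_{j-1}(\lambda)\,V_{j+1,k+1}(\lambda), \qquad
\frac{\partial U_{k+1}}{\partial \gamma_E^{(j)}}(\lambda) = (-1)^{j+1}U_{j}(\lambda)\,V_{j+1,k+1}(\lambda).
\]
Here $V_{j+1,k+1}$ is the "tail" polynomial generated by the same three-term recurrence, started at index $j+1$ with initial values $V=1$ and $V=\lambda+c_{j+2}$. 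This formula is just the cofactor expansion of $U_{k+1}=\det$ of the associated Jacobi matrix: the entry $\gamma_R^{(j)}$ multiplies the product of the leading $j-1$ block and the trailing block. The tail family is again a sequence of Jacobi type with positive off-diagonal coefficients, so Lemma \ref{lemma3} applies to it verbatim. In particular it has real simple roots that interlace with those of the full polynomial.

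\textbf{Step 2: reduce to common roots.} Suppose $\partial U_{k+1}/\partial\gamma_R^{(j)}(\xi)=0$. Then either $U_{j-1}(\xi)=0$ or $V_{j+1,k+1}(\xi)=0$; the $\gamma_E^{(j)}$ case is analogous, with $U_j(\xi)=0$ or $V(\xi)=0$. Write the determinant identity
\[
U_{k+1}=U_j\,V_{j+1,k+1}-\gamma_R^{(j+1)}\,U_{j-1}\,V_{j+2,k+1}
\]
and its analogues. If $\xi$ is a root of $U_{k+1}$ and also of one of the factors, then $\xi$ is a common root of the "head" and "tail" pieces.

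\textbf{Step 3: show such a root is locally constant in the parameter.} On the set where this happens, the root does not move when the single parameter varies. Indeed, $U_{k+1}(\xi,\gamma)=U_{k+1}(\xi,\gamma^*)+(\gamma-\gamma^*)\,\partial_\gamma U_{k+1}(\xi)$ by affinity, and the second term vanishes. So $\xi$ remains a root for every value of $\gamma$. By simplicity of roots (Lemma \ref{lemma3}) and continuity, the branch $\lambda(\gamma)$ through $\xi$ is then identically $\xi$, i.e.\ independent of the parameter.

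\textbf{Main obstacle.} The hard part is Step 3 made rigorous: showing that the vanishing of the derivative at $\xi$ for one parameter value persists along the whole branch rather than at an isolated point. Otherwise the dichotomy fails, and we only get a stationary point. I would handle it via the affinity argument above, combined with the observation that the factor ($U_{j-1}$, $U_j$, or $V$) does not depend on the parameter in question. Hence its roots are fixed, and a root of $U_{k+1}$ coinciding with one of them stays put.
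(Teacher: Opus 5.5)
Your Step 3 is the paper's proof: write $U_{k+1}=\gamma\,r_1+r_2$ with $r_1,r_2$ independent of the single parameter $\gamma$; then $r_1(\xi)=0$ forces $r_2(\xi)=0$, so $\xi$ is a root for every value of $\gamma$, and your appeal to simplicity of the roots (Lemma~\ref{lemma3}) plus continuity, identifying the branch through $\xi$ with the constant $\xi$, makes rigorous what the paper leaves implicit. Steps 1--2 are unnecessary, because affinity alone makes the derivative independent of $\gamma$, which dissolves your ``main obstacle''; note also that the splitting identity in Step 2 is off by one index and, with your definition of $V$, should read $U_{k+1}=U_{j+1}V_{j+1,k+1}-\gamma_R^{(j+1)}U_jV_{j+2,k+1}$.
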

        \begin{proof}
                Since the dependence of $U_{k+1}$ on $\gamma_*^{(j)}$, $j \le k$ is linear, there are suitable polynomials $r_1(x, \boldsymbol \gamma)$, $r_2(x, \boldsymbol \gamma)$ independent of $\gamma_*^{(j)}$,
                such that
                \[ U_{k+1} (x, \boldsymbol \gamma) = \gamma_*^{(j)} r_1(x,\boldsymbol \gamma)  + r_2(x,\boldsymbol \gamma).\]
                Then,
                \[ \frac{\partial U_{k+1}}{\partial \gamma_j} (\xi, \boldsymbol \gamma) = r_1(\xi, \boldsymbol \gamma).\]
                If $r_1(\xi, \boldsymbol \gamma) = 0$, then $0 = U_{k+1} (\xi, \boldsymbol \gamma) = r_2(\xi, \boldsymbol \gamma)$.
                \textcolor{black}{In this case $\xi$ is simultaneously a root of $r_1(\xi, \boldsymbol \gamma)$ and
                $r_2(\xi, \boldsymbol \gamma)$, which do not depend on $\gamma_*{(j)}$, meaning
                that the value of the root is independent of $\gamma_*^{(j)}$.}
        \end{proof}

At this stage, we have a first answer to the problem of bounding the eigenvalues of ${\mathcal P}^{-1} \mathcal A$.
We simply evaluate the extremal zeros of the polynomials described by Theorem \ref {theorem2} for each combination 
of $\gamma_E^{(k)} \in \{\alpha_E^{(k)},\ \beta_E^{(k)}\},\ \gamma_R^{(k)} \in  \{\alpha_R^{(k)},\ \beta_R^{(k)}\}, k = 1, \ldots N$. 

In the next section, we will give some further insight into the monotonicity of a given zero $\xi$ in terms of
the $\gamma$ parameters. This will allow us to reduce the complexity of the algorithm.
\section{How zeros of $\{U_k\}$ move depending on $\gamma_E^{(j)}, \gamma_R^{(j)}$}\label{sec4}
Consider the sequence of polynomials \eqref{eq2} and 
fix an index $0\leq m\leq k$. For each $k\geq0$ we can characterize 
the derivatives of $U_{k+1}(\xi)$ with respect to ${\gamma}_E^{(m)}, {\gamma}_R^{(m)}$, when $\xi$ is a zero of $U_{k+1}$.

\begin{lemma}
Let $\xi$ be a root of $U_{k+1}(\lambda)$. Then, for $m \le k$,
\begin{align}
	\frac{\partial U_{k+1}}{\partial\gamma_E^{(m)}}(\xi)&=(-1)^{m+1} \frac{\left(U_m(\xi)\right)^2}{U_k(\xi)}\prod_{i = m+1}^k \gamma_R^{(i)},\label{eq13} \\
	\frac{\partial U_{k+1}}{\partial\gamma_R^{(m)}}(\xi)&=- \frac{U_m(\xi) U_{m-1}(\xi)}{U_k(\xi)}\prod_{i = m+1}^k \gamma_R^{(i)}.
\label{eq14}
\end{align}
\label{lemma5}
\end{lemma}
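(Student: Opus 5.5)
The plan is to differentiate the three-term recurrence \eqref{eq2} with respect to a single parameter. The derivatives then satisfy the same recurrence from index $m+1$ onward. A Casoratian (discrete Wronskian) argument, in the spirit of the quantity $a_{k+1}$ used in Lemma \ref{lemma3}, turns the value at a root $\xi$ of $U_{k+1}$ into a closed form.

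\textbf{Step 1: the derivative sequence.} Fix $m\le k$ and let $\gamma$ denote either $\gamma_E^{(m)}$ or $\gamma_R^{(m)}$. By \eqref{eq2}, $U_0,\dots,U_m$ do not depend on $\gamma$, and $\gamma$ enters only in the step producing $U_{m+1}$. Setting $D_j=\partial U_j/\partial\gamma$, we get $D_m=0$ and
\[
D_{m+1}=(-1)^{m+1}U_m \ \ (\gamma=\gamma_E^{(m)}),\qquad D_{m+1}=-U_{m-1}\ \ (\gamma=\gamma_R^{(m)},\ m\ge1).
\]
The case $m=0$ for $\gamma_E^{(0)}$ is included, since $U_1=\lambda-\gamma_E^{(0)}$ gives $D_1=-1=(-1)^{1}U_0$. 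For $j\ge m+1$, differentiating \eqref{eq2} gives $D_{j+1}=(\lambda+c_{j+1})D_j-\gamma_R^{(j)}D_{j-1}$, with $c_{j+1}=(-1)^{j+1}\gamma_E^{(j)}$ as in the proof of Lemma \ref{lemma3}.

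\textbf{Step 2: reduce to one auxiliary solution.} Let $V_j$ be the solution of this same recurrence for $j\ge m+1$ with $V_m=0$ and $V_{m+1}=1$. These are the associated polynomials started at level $m$. By linearity and uniqueness of solutions of the recurrence, $D_j=D_{m+1}V_j$ for all $j\ge m$. It therefore suffices to show that
\[
V_{k+1}(\xi)=\frac{U_m(\xi)}{U_k(\xi)}\prod_{i=m+1}^k\gamma_R^{(i)}
\]
at a root $\xi$ of $U_{k+1}$. Multiplying this by the two values of $D_{m+1}$ yields \eqref{eq13} and \eqref{eq14}.

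\textbf{Step 3: Casoratian identity.} Define $W_j=U_jV_{j+1}-U_{j+1}V_j$ for $j\ge m$. Both $(U_j)$ and $(V_j)$ satisfy the recurrence at indices $j+2\ge m+2$. Substituting the recurrence for $U_{j+2}$ and $V_{j+2}$, the $(\lambda+c_{j+2})$ terms cancel, leaving $W_{j+1}=\gamma_R^{(j+1)}W_j$. The initial value is $W_m=U_m\cdot1-U_{m+1}\cdot0=U_m$. Hence
\[
W_k=U_m\prod_{i=m+1}^k\gamma_R^{(i)},
\]
where the product is empty if $m=k$. At $\lambda=\xi$ we have $U_{k+1}(\xi)=0$, so $W_k(\xi)=U_k(\xi)V_{k+1}(\xi)$. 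Dividing by $U_k(\xi)$ gives the required formula for $V_{k+1}(\xi)$.

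\textbf{Main obstacle.} The delicate points are bookkeeping ones. First, the index of $\gamma_R$ in the Casoratian step must be tracked carefully, since the recurrence producing $U_{j+2}$ involves $\gamma_R^{(j+1)}$; this is what makes the product run over $i=m+1,\dots,k$. Second, the division requires $U_k(\xi)\neq0$. This follows from Lemma \ref{lemma3}: the roots of $U_k$ and $U_{k+1}$ strictly interlace (equivalently, $a_{k+1}(\xi)=-U_k'(\xi)U_{k+1}(\xi)+U_k(\xi)U_{k+1}'(\xi)>0$ forces $U_k(\xi)\ne0$). I would verify the base case $m=k$ directly as a sanity check: there $D_{k+1}$ is simply $(-1)^{k+1}U_k$ or $-U_{k-1}$, which matches \eqref{eq13}–\eqref{eq14} with an empty product.
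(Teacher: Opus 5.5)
Your proof is correct and follows essentially the same route as the paper. Both differentiate the recurrence, note that the derivative sequence satisfies the same three-term recurrence from index $m+1$ on, telescope the Casoratian via $W_{s+1}=\gamma_R^{(s)}W_s$, and use $U_{k+1}(\xi)=0$ to isolate the top derivative. The differences are cosmetic. You factor out $D_{m+1}$ through the normalized solution $V_j$, which treats the $\gamma_E$ and $\gamma_R$ cases and the case $m=k$ uniformly. The paper instead forms the Casoratian of $U_j$ with the derivative sequence directly, and applies the root condition through $\eta_k=\gamma_R^{(k)}U_{k-1}(\xi)/U_k(\xi)$ before telescoping.
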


\begin{proof}
If $m=k$ we compute directly from \eqref{eq2},  
\begin{equation}
	\label{starting}
  \frac{\partial U_{k+1}}{\partial\gamma_E^{(k)}}(\lambda) = (-1)^{k+1}U_k(\lambda), \qquad
	\frac{\partial U_{k+1}}{\partial\gamma_R^{(k)}}(\lambda) = - U_{k-1}(\lambda),
\end{equation}
which correspond to \eqref{eq13} and \eqref{eq14}, if $m = k$.
Let now $m < k$,
We denote for brevity with $U'_{k,m}$ both $\dfrac{\partial U_{k+1}(\xi)}{\partial\gamma_E^{(m)}}$ and
$\dfrac{\partial U_{k+1}(\xi)}{\partial\gamma_R^{(m)}}$, since the two sequences obey (for $k > m)$
the same recurrence relation:
\begin{equation}
	\label{recder}
	U'_{k+1,m}(\xi) = \underbrace{\left(\xi + (-1)^{k+1} \gamma_E^{(k)} \right)}_{\eta_k}  U'_{k,m}(\xi)-\gamma_R^{(k)}U'_{k-1,m}(\xi)
	\equiv \eta_k U'_{k,m}(\xi)-\gamma_R^{(k)}U'_{k-1,m}(\xi),
\end{equation}
differing only for the starting point \eqref{starting} obtained  with $k=m$. \\
\noindent
\textcolor{black}{
With the previous definition of $\eta_k$ we can rewrite the recurrence \eqref{eq2} at a root $\xi$ of $U_{k+1}$ as
\[ 0 = U_{k+1} (\xi) = \eta_k U_{k}(\xi) - \gamma_R^{(k)}U_{k-1}(\xi) \quad \Longrightarrow  \quad \eta_k = \gamma_R^{(k)}\frac{U_{k-1}(\xi)}{U_k(\xi)} 
\]}
hence \eqref{recder} becomes 
\begin{equation}
	U'_{k+1,m}(\xi) = \frac{\gamma_R^{(k)}}{U_k(\xi)}\underbrace{\left[U_{k-1}(\xi)U'_{k,m}(\xi)-U_k(\xi)U'_{k-1,m}(\xi)\right]}
	_{W_k(\xi)}
	\equiv \frac{\gamma_R^{(k)}}{U_k(\xi)} W_k(\xi).
    \label{eqlemma3}
\end{equation}
We now re-write $W_{s+1}(\xi)$ in terms of $W_s(\xi)$, for $m+1 \le s \le k$.
\[
\begin{aligned}
	W_{s+1}(\xi)&=U_{s}(\xi)U'_{s+1,m}(\xi)-U_{s+1}(\xi) U_{s,m}(\xi)  \\
&=U_s(\xi)
	\left(\eta_sU'_{s,m}(\xi)-\gamma_R^{(s)}U'_{s-1,m}(\xi)\right)-U_{s+1}(\xi)U'_{s,m}(\xi)\\
	&=U'_{s,m}(\xi)\left(\eta_s U_s(\xi) - U_{s+1}(\xi)\right)-\gamma_R^{(s)}U_s(\xi)U'_{s-1,m}(\xi)\\
	&=\gamma_R^{(s)}\left(U_{s-1}U'_{s,m}(\xi)-U_s(\xi)U'_{s-1,m}(\xi)\right) = \gamma_R^{(s)}W_s(\xi).
\end{aligned}
\]
By induction, we have that 
\[
	W_{k}(\xi) = \gamma_R^{(k-1)} \gamma_R^{(k-2)} \ldots \gamma_R^{(m+1)}  W_{m+1}(\xi).
\]
\textcolor{black}{
To evaluate $W_k(\xi)$, we can now compute $W_{m+1}(\xi)$, distinguishing now the cases in which the derivative is obtained either with respect to $\gamma_E^{(m)}$ or to $\gamma_R^{(m)}$.
Notice that we can compute $\displaystyle \frac{\partial U_{m+1}(\xi)}{\partial \gamma_E^{(m)}}, \frac{\partial U_{m+1}(\xi)}{\partial \gamma_R^{(m)}}$ using \eqref{starting} with $k = m$ and
that $\displaystyle \frac{\partial U_{m}(\xi)}{\partial \gamma_*^{(m)}} = 0$, since polynomial $U_m$ does not depend on $\gamma_*^{(m)}$:
	\begin{eqnarray} 
	\label{WE}	W_{m+1}(\xi) &=&   U_{m}(\xi) \frac{\partial U_{m+1}(\xi)}{\partial \gamma_E^{(m)}} - U_{m+1}(\xi) \frac{\partial U_{m}(\xi)}{\partial \gamma_E^{(m)}} = (-1)^{m+1} U_m(\xi)^2 \\
	\label{WR}	W_{m+1}(\xi) &=& U_{m}(\xi) \frac{\partial U_{m+1}(\xi)}{\partial \gamma_R^{(m)}} - U_{m+1}(\xi) \frac{\partial U_{m}(\xi)}{\partial \gamma_R^{(m)}}= -U_{m-1}(\xi) U_m(\xi). 
	\end{eqnarray} Combining either \eqref{WE} or \eqref{WR} with \eqref{eqlemma3} we get the thesis.	}
\end{proof}
Now consider $a_{k+1}(\lambda)$, defined in \eqref{ak}, and compute it in one of the roots $\xi^{(k+1)}$ of $U_{k+1}$
\[
	0<a_{k+1}(\xi^{(k+1)})=U_k(\xi^{(k+1)})U_{k+1}^\prime(\xi^{(k+1)})
\]
we get that $U_k(\xi^{(k+1)})$ and $U_{(k+1)}^\prime(\xi^{(k+1)})$ must have the same sign. Now combining \eqref{eq10b} with (\ref{eq13}) and (\ref{eq14}) we obtain, at a root $\xi^{(k+1)}$,
\begin{eqnarray}
	\nonumber \textnormal{sgn}\left(\frac{\partial\lambda}{\partial\gamma_E^{(m)}}(\xi^{(k+1)})\right) &= &
	    -\textnormal{sgn}\left(\frac{\partial U_{k+1}}{\partial\gamma_E^{(m)}}(\xi^{(k+1)}) \right) 
	    \textnormal{sgn}(U'_{k+1} (\xi^{(k+1)}))\\ 
	    \nonumber &= & (-1)^m \textnormal{sgn}(U_k(\xi^{(k+1)})) \textnormal{sgn}\left(U'_{k+1}(\xi^{(k+1)})\right)
	    \\ \label{E} & =& (-1)^m \\
	\nonumber \textnormal{sgn}\left(\frac{\partial\lambda}{\partial\gamma_R^{(m)}}(\xi^{(k+1)})\right) &= &
	    -\textnormal{sgn}\left(\frac{\partial U_{k+1}}{\partial\gamma_R^{(m)}}(\xi^{(k+1)}) \right)  
	    \textnormal{sgn}(U'_{k+1} (\xi^{(k+1)})) \\ 
\nonumber 		    &= & 
	    \textnormal{sgn}\left(U_m(\xi^{(k+1)}) U_{m-1}(\xi^{(k+1)})\right) \textnormal{sgn}({U_k(\xi^{(k+1)})})
	    \textnormal{sgn}\left(U'_{k+1}(\xi^{(k+1)})\right) \\
        \label{R}
	    & = & \textnormal{sgn}\left(U_m(\xi^{(k+1)}) U_{m-1}(\xi^{(k+1)})\right).
    \end{eqnarray}
The sign of derivatives of $\lambda$ with respect to $\boldsymbol{\gamma}_R^{(m)}$
needs further  inspecting the sign of $U_m(\xi^{(k+1)}) U_{m-1}(\xi^{(k+1)})$.
	\begin{lemma}
	\label{lemmaCs}
	Let $\xi^{(k+1)}$ a root of $U_{k+1}(\lambda)$ and $C_s= U_s(\xi^{(k+1)}) U_{s-1}(\xi^{(k+1)})$ then 
	\begin{equation}
\label{bbb}			\textnormal{sgn}(C_s)
	\begin{cases}
		<  0 & \text{if} \qquad   \xi^{(k+1)}  = \xi_{1}^{(k+1)} \\
		>  0 & \text{if} \qquad   \xi^{(k+1)}  = \xi_{k+1}^{(k+1)}. \\
	\end{cases}
	\end{equation}
\end{lemma}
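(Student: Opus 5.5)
The plan is to use only the strict interlacing of the roots established in Lemma \ref{lemma3} together with the fact that every $U_s$ is monic of degree $s$ (Proposition \ref{prop1}). The key observation is that the extremal roots of $U_{k+1}$ lie strictly outside the convex hull of all roots of $U_s$ for every $s\le k$. Once this is known, the signs of $U_s(\xi^{(k+1)})$ at the extremal roots are completely determined by the degree.

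\emph{Step 1: the extremal roots move outward.} First I would make the separation explicit. The proof of Lemma \ref{lemma3} shows that $U_{k+1}$ takes alternating signs at the $k$ roots $\xi_1^{(k)}<\dots<\xi_k^{(k)}$ of $U_k$. By \eqref{aknew} and the monicity of $U_k$, we have $U'_k(\xi_k^{(k)})>0$, so $U_{k+1}(\xi_k^{(k)})<0$. Since $U_{k+1}\to+\infty$ as $\lambda\to+\infty$, there is a root of $U_{k+1}$ in $(\xi_k^{(k)},+\infty)$. Symmetrically, $\operatorname{sgn}U'_k(\xi_1^{(k)})=(-1)^{k-1}$ gives $\operatorname{sgn}U_{k+1}(\xi_1^{(k)})=(-1)^k$. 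Since $U_{k+1}$ behaves like $(-1)^{k+1}\infty$ as $\lambda\to-\infty$, there is also a root in $(-\infty,\xi_1^{(k)})$. The $k-1$ inner intervals account for the remaining roots, and $U_{k+1}$ has exactly $k+1$ roots. Hence
\[
\xi_1^{(k+1)}<\xi_1^{(k)},\qquad \xi_{k+1}^{(k+1)}>\xi_k^{(k)} .
\]
By induction on the index, $\xi_1^{(k+1)}$ is strictly smaller than every root of every $U_s$ with $1\le s\le k$, and $\xi_{k+1}^{(k+1)}$ is strictly larger than all of them. In particular $U_s(\xi^{(k+1)})\ne 0$ for $s\le k$ when $\xi^{(k+1)}$ is extremal, so $C_s\neq 0$. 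This is the only step needing care, namely checking the boundary sign bookkeeping; everything else is immediate.

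\emph{Step 2: the largest root.} Since $U_s$ is monic and $\xi_{k+1}^{(k+1)}$ exceeds all its roots, $U_s(\xi_{k+1}^{(k+1)})>0$ for every $0\le s\le k$; for $s=0$ this holds because $U_0\equiv1$. Hence $C_s=U_sU_{s-1}>0$ for $1\le s\le k$.

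\emph{Step 3: the smallest root.} Here $\xi_1^{(k+1)}$ lies to the left of all roots of $U_s$, so $\operatorname{sgn}U_s(\xi_1^{(k+1)})=(-1)^s$, again because $U_s$ is monic of degree $s$ (this also holds for $s=0$). Therefore
\[
\operatorname{sgn}C_s=(-1)^s(-1)^{s-1}=-1 ,
\]
which proves \eqref{bbb}.
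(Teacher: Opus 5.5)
Your proof is correct and follows the same route as the paper. The extremal roots of $U_{k+1}$ lie outside the root set of every $U_s$ with $s\le k$, by interlacing, so monicity (Proposition \ref{prop1}) forces $U_s(\xi_{k+1}^{(k+1)})>0$ and $\operatorname{sgn}U_s(\xi_1^{(k+1)})=(-1)^s$. Your Step 1 supplies a justification the paper only leaves implicit (its written argument compares $\xi_{k+1}^{(k+1)}$ only with the other roots of $U_{k+1}$), so your version is the more complete of the two.
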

\begin{proof}
	To see the first inequality in \eqref{bbb} we observe that, since $\xi^{k+1}_{k+1}>\xi_j^{k+1}$ for all $j\leq k$, by Proposition \ref{prop1}, $U_s(\xi^{k+1}_{UB,+})$, $U_{s-1}(\xi^{k+1}_{UB,+})$ 
	have the same sign. The second inequality is obtained by observing that $\xi^{k+1}_{1} < \xi^{k+1}_j$ for all $j \leq k$; Proposition \ref{prop1} also
	implies that $U_s(\xi^{k+1}_{LB,-})$ and $U_{s-1}(\xi^{k+1}_{LB,-})$ have opposite signs. 

\end{proof}
In view of obtaining upper bounds for the negative and lower bounds for the positive roots, it is not possible to predict
the signs of $U_m(\xi^{(k+1)}) U_{m-1}(\xi^{(k+1)})$ in all cases, and hence the monotonicity of such roots
for a given polynomial degree $k+1$. However, we recall that our goal is to find bounds for the eigenvalues
of the preconditioned matrix, and therefore to compute the endpoints of the intervals in \eqref{N+1odd}.
To this aim, denote with $j$ the index such that $\xi^{(j)}  = \max\{\xi_{-,UB}^{(i)}, \ i = 2,4, \ldots, i \le N+1\}$.
In such a case, for any $m < j$,  polynomials $U_{m}(\xi^{(j)})$ and $U_{m-1}(\xi^{(j})$ must have the sign of $U_m(0)$ and $U_{m-1}(0)$, respectively.
In view of Proposition \ref{prop2} we have that 
\[ \textnormal{sgn}(U_{m}(0) U_{m-1}(0)) = 
\textnormal{sgn}(U_{m}(\xi^{(j)}) U_{m-1}(\xi^{(j})) = (-1)^m.\]
The same applies considering a polynomial $U_j(\lambda)$ in which  $j$ is such that $\xi^{(j)}  = \min\{\xi_{+,LB}^{(i)}, \ i = 1,3, \ldots, i \le N+1\}$.

\section{Bounds for the eigenvalues of the preconditioned matrix}
\label{sec5}
Summarizing the results of this Section, we are now ready to describe a procedure to compute the intervals \eqref{N+1odd} bounding
the eigenvalues of $\mathcal P^{-1} \mathcal A$ having $N+1$ blocks:
\begin{itemize}
	\item \textbf{Lower bound for the negative eigenvalues}. This is given by the smallest zero of the polynomial $U_{N+1}(\lambda)$.
		Regarding the choice of parameters, this is determined by the sign of the derivatives
                of the root with respect to both, and because we are minimizing the root. 
		In particular, for the $\gamma_E$ parameters, the sign of the derivative
		is $(-1)^m$, which  provides the values $\alpha_E^{(0)}, \beta_E^{(1)}, \alpha_E^{(2)}, \ldots $. By Lemma \ref{lemmaCs}, the derivative
		of the root with respect to the $\gamma_R$ parameters is negative, which, considering that we are seeking a minimum,
		provides the values
		$\beta_R^{(1)}, \beta_R^{(2)}, \ldots, \beta_R^{(N)}$.
	\item \textbf{Upper bounds for the negative eigenvalues}. We should consider evaluating the largest negative eigenvalue of 
		all polynomials $U_{2i}, 1 \le 2i \le N+1$.  The choice of the parameters is driven by the sign of the derivatives
		of the root with respect to both. The sign of the derivative of the root with respect to $\gamma_E^{(m)}$ and to
		$\gamma_R^{(m)}$  is $(-1)^m$. This, combined with the fact that we need to maximize the zero, provides the correct choice 
		of the parameters in this case, which are: 
	$\beta_E^{(0)}, \alpha_E^{(1)}, \beta_E^{(2)}, \ldots;\qquad  \alpha_R^{(1)}, \beta_R^{(2)}, \alpha_R^{(3)}, \ldots$.
		Call this root $\xi_-^{(2i)}$. Finally,  set \[b_{N+1} = \max\{\xi_-^{(2i)}, \ 1 \le 2i \le N+1\}. \]
	\item \textbf{Lower bounds for the positive eigenvalues}. We should consider evaluating the smallest positive eigenvalue of 
		all polynomials $U_{2i+1}, 0 \le 2i \le N$. As before, the sign of the derivative of the root with respect to $\gamma_E^{(m)}$ and to
                $\gamma_R^{(m)}$  is $(-1)^m$. This, combined with the fact that we need to minimize the zero, provides the correct choice
                of the parameters in this case, which are:
	$\alpha_E^{(0)}, \beta_E^{(1)}, \alpha_E^{(2)}, \ldots;\qquad  \beta_R^{(1)}, \alpha_R^{(2)}, \beta_R^{(3)}, \ldots$.
		Call this root $\xi_+^{(2i+1)}$. Finally,  set \[a_{N+1} = \min\{\xi_+^{(2i+1)}, \ 0 \le 2i \le N\}. \]
	\item \textbf{Upper bound for the positive eigenvalues}. This is given by the largest root of the polynomial $U_{N+1}(\lambda)$.
                For the $\gamma_E$ parameters, the sign of the derivative
		is $(-1)^m$, which provides the values $\beta_E^{(0)}, \alpha_E^{(1)}, \beta_E^{(2)}, \ldots $. By Lemma \ref{lemmaCs}, 
		the derivative
		of the root with respect to the $\gamma_R$ parameters is positive, which, considering that we are seeking a maximum,
		provides the values
		$\beta_R^{(1)}, \beta_R^{(2)}, \ldots, \beta_R^{(N)}$.
\end{itemize}
Summarizing, even if the possible combinations of the extremal values of the parameters are $2^{2N+1}$, we have obtained a \textit{linear} algorithm that requires the computation of the zeros of polynomials of degree $2, \ldots, N$ for one only combination of the parameters,
and four times the computation of the zeros of $U_{N+1}$, with as many different combinations of parameters.

\begin{Remark}
Regarding the double saddle point linear system, we report the bounds following the developments in Section 4, which coincide with those already proved in \cite{BMPP_COAP24}. 
	Consider the block matrix $\mathcal{A}$ with $N = 2$ (three blocks). Any eigenvalue $\lambda$ of ${\mathcal{P}}^{-1}\mathcal{A}$ satisfies
	\begin{equation}
		\label{boundsN2}
		\lambda       \in [\mu_{-}^{LB},\ \mu_{-}^{UB}] \cup [\mu_{+}^{LB},\ \mu_{+}^{UB}]
	\end{equation}
\textit{where}
	\begin{eqnarray}
		\mu_{-}^{LB} &=&\xi_{LB,-}^{(3)}(\alpha^{(0)}_E,\beta^{(1)}_E,\beta^{(1)}_R,\alpha^{(2)}_E,\beta^{(2)}_R)\nonumber, \\
		\mu_{-}^{UB} & =&\xi_{UB,-}^{(2)} (\beta_E^{(0)}, \alpha_E^{(1)}, \alpha_R^{(1)}) \nonumber, \\
		\label{UB3}	\mu_{+}^{LB} &=&\min \left\{\alpha^{(0)}_E,\xi_{LB,+}^{(3)}(\alpha^{(0)}_E,\beta^{(1)}_E,\beta^{(1)}_R,\alpha^{(2)}_E,\alpha^{(2)}_R)\right\}, \\
		\mu_{+}^{UB} & =&\xi_{UB,+}^{(3)} (\beta^{(0)}_E,\alpha^{(1)}_E,\beta^{(1)}_R,\beta^{(2)}_E,\beta^{(2)}_R). \nonumber
\end{eqnarray}
\end{Remark}
\section{Numerical experiments: Randomly-generated matrices.}
\label{Sec6}
We now undertake numerical tests to validate the theoretical bounds of Theorem \ref{lemma5}. We determine the extremal eigenvalues of ${\mathcal{P}}^{-1}\mathcal{A}$ on randomly-generated linear systems. Specifically, we considered a simplified case with $E_i\equiv 0, i > 0$ and run 
several different test cases, combining the values of the extremal eigenvalues (Table \ref{tab:1}) of the symmetric positive definite matrices involved.
In particular

\begin{itemize}
	\item Case $N = 2$. We have three parameters with 6 different endpoints of the corresponding intervals. Overall, we run $3^6=729$ test cases.
	\item Case $N = 3$. We have 4 parameters with 4 different endpoints of the corresponding intervals. Overall, we run $4^4=256$ test cases.
	\item Case $N = 4$. We have 5 parameters with 4 different endpoints of the corresponding intervals. Overall, we run $5^4=1024$ test cases.
\end{itemize}

\begin{table}[h!]
    \centering
    \begin{tabular}{|l|ccc|c|ccc}
    \hline
	    $\alpha_E^{(0)},\ \alpha_R^{(1)},\ \alpha_R^{(2)}$ & 0.1 & 0.3 & 0.9 &
         $\alpha_E^{(0)},\ \alpha_R^{(1)}, \ldots \alpha_R^{(N)}$ & 0.1 & 0.9\\
	    $\beta_E^{(0)},\ \beta_R^{(1)},\ \beta_R^{(2)}$ & 1.2 & 1.8 & 5 &
         $\beta_E^{(0)},\ \beta_R^{(1)}, \ldots \beta_R^{(N)}$ & 1.2 & 5\\
	 \hline
	    \multicolumn{3}{c}{Case $N = 2$} && 
	    \multicolumn{3}{c}{Cases $N > 2$}  \\
    \end{tabular}
    \caption{Extremal eigenvalues of the relevant symmetric positive definite matrices used in the verification of the bounds.}
    \label{tab:1}
\end{table}
\begin{figure}[h!]
    \centering
    \includegraphics[width=0.9\linewidth]{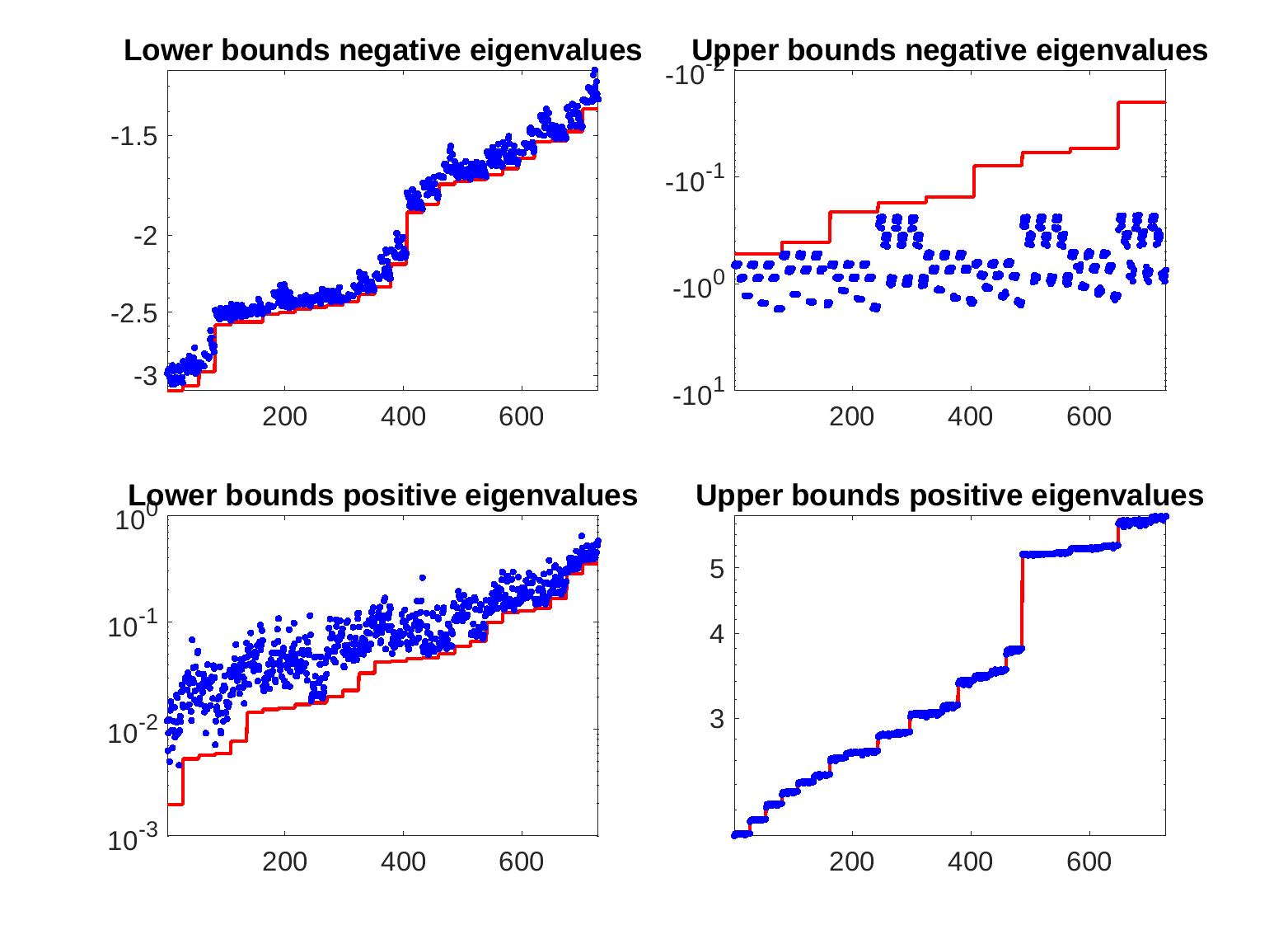}
    \caption{Double saddle-point linear system. Extremal eigenvalues of the preconditioned matrix (blue dots) and bounds (red line) after $10$ runs with each combination of the parameters from Table \ref{tab:1}.}
    \label{fig:2}
\end{figure}

Each test case has been run $10$ times, generating random matrices that satisfy the relevant spectral properties, and we record the most extreme eigenvalues for each test case.

In more detail, the dimensions $n_0$, $n_1, \ldots$ are computed using the closest integer to $50+10*\texttt{rand}$, using \texttt{MatLab}'s \texttt{rand} function, recomputing as necessary to ensure that $n_k \geq n_{k+1}, \forall k \le N-1$. 
Matrices $A_k$ and $B_k$ are computed using \texttt{MATLAB}'s \texttt{randn} function, whereupon we take the symmetric part of $A_k$ and then add an identity matrix times the absolute value of the smallest eigenvalue to ensure symmetric positive semi-definiteness (definiteness if $k=0$). We then choose $\widehat{S}_0$ as a linear combination of $A$ and the identity matrix, such that the eigenvalues of $E_0$ are contained in $[\alpha_E^{(0)}, \beta_E^{(0)}]$, and similarly to construct $\widehat{S}_k$ for any $k$. 

In Figures \ref{fig:2}, \ref{fig:3}, and \ref{fig:4}, we present the (ordered) computed extremal eigenvalues and theoretical bounds for the $N=2, 3$ and $N=4$ cases. 
We notice that the plots indicate (for these problems) that three out of four bounds capture the behavior of the eigenvalues very well, while only the upper bounds on the negative eigenvalues (for $N = 2,4$) and the lower
bounds on the positive eigenvalues ($N = 3$) are not as tight.
\begin{figure}[h!]
    \centering
    \includegraphics[width=0.9\linewidth]{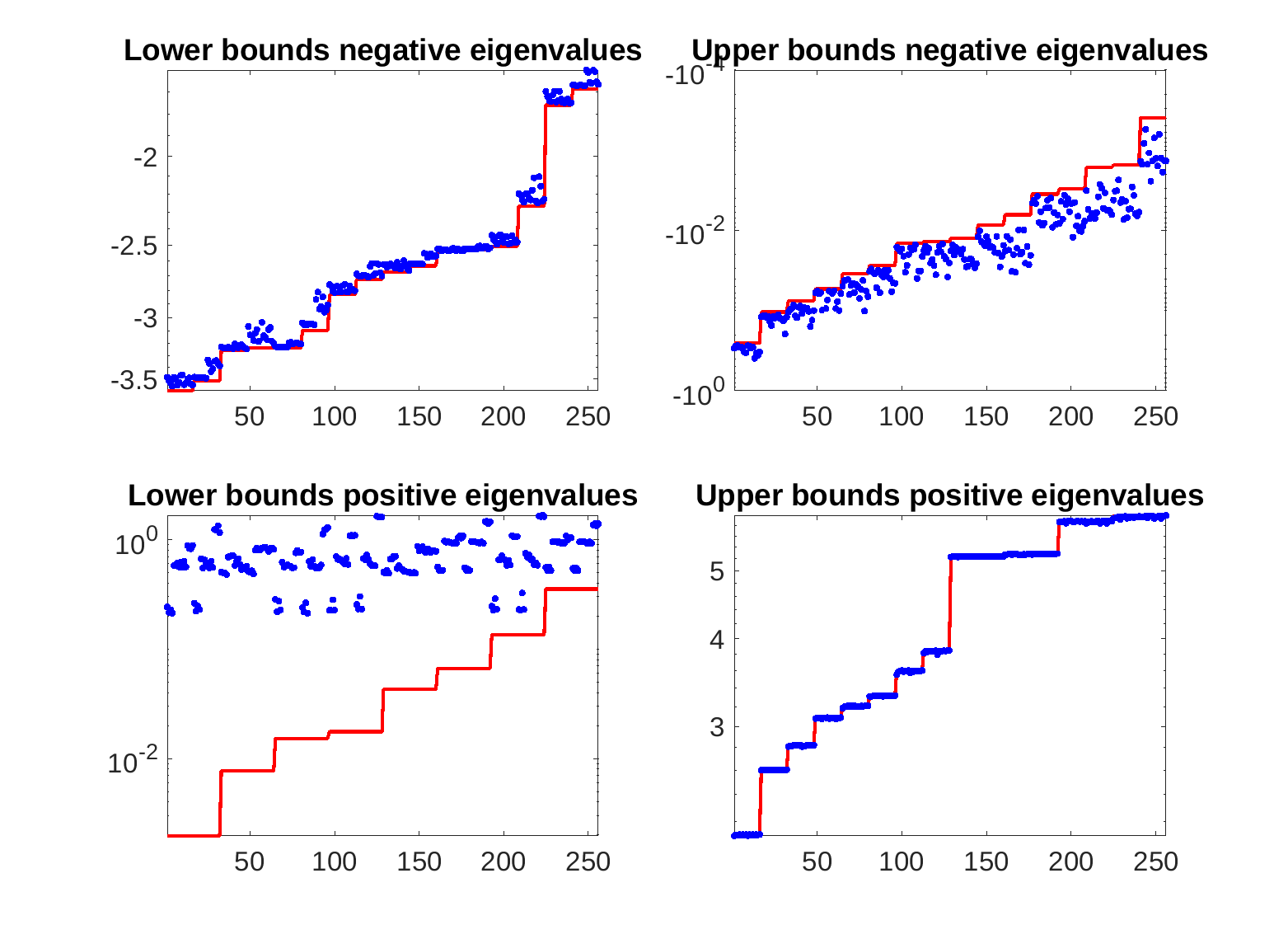}
	\vspace{-8mm}
    \caption{Triple saddle-point linear system. Extremal eigenvalues of the preconditioned matrix (blue dots) and bounds (red line) after $10$ runs with each combination of the parameters from Table \ref{tab:1}.}
    \label{fig:3}
\end{figure}
\begin{figure}[h!]
    \centering
    \includegraphics[width=0.9\linewidth]{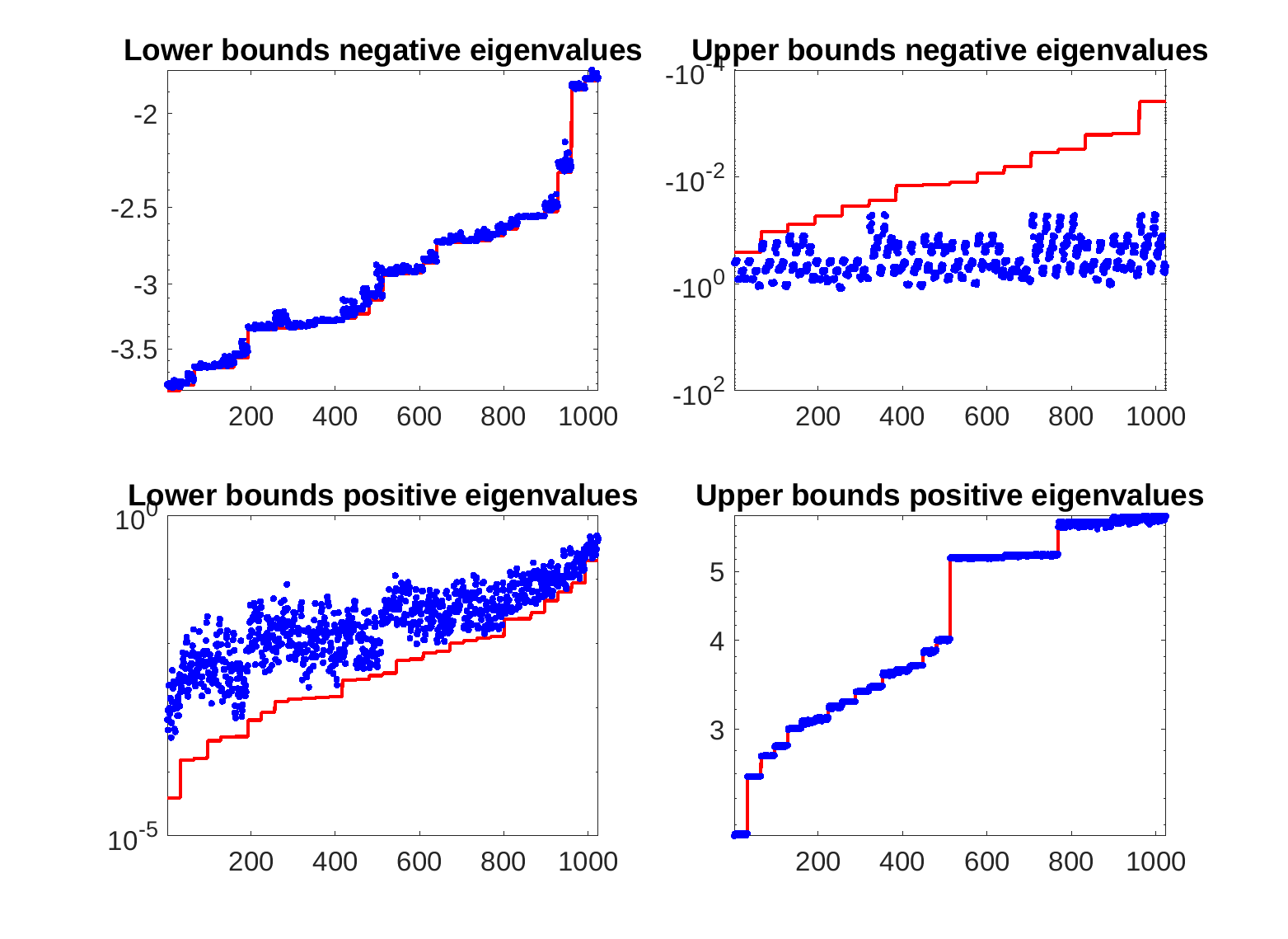}
    \vspace{-8mm}
    \caption{Quadruple saddle-point linear system. Extremal eigenvalues of the preconditioned matrix (blue dots) and bounds (red line) after $10$ runs with each combination of the parameters from Table \ref{tab:1}.}
    \label{fig:4}
\end{figure}

\textcolor{black}
{
\section{Numerical experiments: A realistic test case}
\label{Sec7}
A numerical experiment, concerning the mixed form of Biot's poroelasticity equations, is used to investigate the quality of the bounds for the eigenvalues of the preconditioned matrix and the effectiveness of the proposed triangular preconditioners on a realistic problem. 
For the PDEs and boundary conditions governing this model, we refer to the description in \cite{BerFerMar25}.
As a reference test case for the experiments, we considered the porous cantilever beam problem originally introduced in \cite{WheelPhil} and already used in \cite{Frigo202136}. The domain is the unit cube with no-flow boundary conditions along all sides, zero displacements along the left edge, and a uniform load applied on top.  The material properties are described in \cite[Table 5.1]{BerFerMar25}.
We considered the Mixed-Hybrid Finite Element discretization of the coupled Biot equations, which gives rise to a double saddle point linear system.
The test case refers to a unitary cubic domain, uniformly discretized using a $h = 0.05$ mesh size in all the spatial dimensions. The size of the block matrices and the overall number of nonzeros in the double saddle-point system are reported in Table \ref{tab:size}.
\begin{table}[h!]
	\caption{3D cantilever beam problem on a unit cube: size and number of nonzeros of the test matrices. }
	\label{tab:size}
\begin{center}
\begin{tabular}{r|rrrcr}
	$1/h$&$n_0$  & $n_1$  &$n_2$   &$n_0+n_1+n_2$  & nonzeros\\
	\hline            
	20 &27783  &8000  &25200 &60983  &2.4$\times 10^6$  \\
\end{tabular}
\end{center}
\end{table}
\subsection{Handling the case $n_2 > n_1$}
The theory previously developed is based on the assumption $n_0 \ge n_1 \ge n_2 \ldots$, which is not verified in this problem,
where $n_2 = 25200 > n_1 = 8000$. The main consequence of this is that $R_2 R_2^\top$ is singular and $\gamma_R^{(2)} = 0$.
This drawback can be circumvented by attacking the eigenvalue problem \eqref{eq1} in a different way. Let us write \eqref{eq1}
for a double saddle-point linear system:
\begin{equation}
    \begin{array}{ccccccccc}
        (E_0-\lambda I)u_1 & + & R_1^Tu_2 & = & 0\\
        R_1u_1 & + &-(E_1+\lambda I) u_2 & +& R_2^Tu_3 &  = & 0 \\
         & & R_2u_2 & + & (E_2-\lambda I)u_3 & = & \ 0.
    \end{array}
    \label{eq11}
\end{equation}
As in the proof of Theorem 1, we have, for all $\lambda \not \in [\alpha_E^{(0)}, \beta_E^{(0)}]$
\begin{equation} \label{u2} (R_1Y_1(\lambda)^{-1}R_1^T-\lambda I -E_1)u_2=R_2^Tu_3. \end{equation}
Assuming now that $\lambda \not \in [\alpha_E^{(2)}, \beta_E^{(2)}]$, we obtain $u_3$ from the third of \eqref{eq11}
\[ u_3 = -(E_2-\lambda I)^{-1} R_2u_2.\]
Substituting this into \eqref{u2} yields
\[ \left(R_1Y_1(\lambda)^{-1}R_1^T-\lambda I -E_1 + R_2^\top (E_2-\lambda I)^{-1} R_2)\right)u_2 = 0. \]
Premultiplying now by $u_2^\top$ and defining $w = R_1^\top u_2$ and $z = R_2 u_2$,  yields
\[ \frac{w^\top Y_1(\lambda)^{-1} w}{w^\top w} \gamma_R^{(1)}  -\lambda -\gamma_E^{(1)} + \frac{z^\top (E_2-\lambda I)^{-1} z}{z^\top z} \overline \gamma_R^{(2)} = 0. \]
Before proceeding we notice that $\overline \gamma_R^{(2)}$, differently from definition \eqref{gammaER}, is the Rayleigh quotient of $R_2^\top R_2$.
Due to the fact that $R_2$ has full column rank we can bound $\overline \gamma_R^{(2)}$ as $0 < \alpha_R^{(2)} \le \overline \gamma_R^{(2)} \le \beta_R^{(2)}.$ 
Hence, from now on, we will refer to $\overline \gamma_R^{(2)}$ as simply $\gamma_R^{(2)}$.
Applying Lemma \ref{lemma2} to $Y_1$ and Lemma \ref{lemma1} to $E_2-\lambda I$, and using the recurrence definition \eqref{eq2}, we rewrite the left-hand side of the previous as
\[ -\frac{U_2(\lambda)}{U_1(\lambda)} + \frac{1}{\gamma_E^{(2)} - \lambda} \gamma_R^{(2)}  =  \frac{U_3(\lambda)}{U_1(\lambda) (\lambda - \gamma_E^{(2)})},  \]
which tells that any eigenvalue of $\mathcal P^{-1} \mathcal A$ not lying in 
$[\alpha_E^{(0)} , \beta_E^{(0)}] \cup [\alpha_E^{(2)}, \beta_E^{(2)}]$ is a zero of $U_3(\lambda)$, or, equivalently, that
$\lambda \in \mathcal I_1 \cup \mathcal{I}_{E_2} \cup \mathcal I_3$. \\[.5em]
\noindent
To approximate the $(1,1)$ block, we employed the classical incomplete Cholesky factorization (IC) with fill-in based on a drop tolerance $\delta$.
The approximations $\widehat S_1$ and $\widehat S_2$ of the Schur complements are as in \cite{BerFerMar25}, which prove completely
scalable with the problem size. Even if the IC preconditioner does not scale with the discretization parameter, it is helpful to conduct
a spectral analysis as a function of the drop tolerance. \\
\noindent
In Table \ref{Tab:indicators} we report the values of the indicators for three runs corresponding to the values of the threshold parameter $\delta = 10^{-3}, 10^{-4}, 10^{-5}$, and $10^{-6}$. In combination with these parameters, the bounds obtained by the theory are reported in Table \ref{Bounds_and_its}, where we also provide the number of MINRES iterations
to solve $\mathcal A x = b$, for the three instances described. The right-hand side has been selected by imposing a true solution of all ones and the iteration
is stopped as soon as the relative residual norm is below $10^{-14}$. The CPU times (in seconds) refer to a Matlab 
implementation on an Intel Core Ultra 7 165U Notebook at 3.8 GHz with 32 GB RAM.
%
\begin{table}[h!]
\begin{center}
	\begin{tabular}{l|lrlrlrlr}
    & \multicolumn{2}{c}{$\delta = 10^{-3}$} 
    & \multicolumn{2}{c}{$\delta = 10^{-4}$} 
    & \multicolumn{2}{c}{$\delta = 10^{-5}$} 
    & \multicolumn{2}{c}{$\delta = 10^{-6}$} \\
		 \hline
         $I_{E_0}$ & $[0.0099 $, & $ 1.2358]$ & $[0.0422 $, & $ 1.2036]$ & $[0.2563 $, & $ 1.2232]$& 
		 $[0.9600 $, & $ 1.0118]$ \\
		 $I_{E_1} $ & $[5\times 10^{-4} $, & $ 0.0889]$ \\
         		 $I_{E_2}$ & $[0.9976            $,  & $ 1.0001]$ \\
               $I_{R_1}$ & $[2\times 10^{-4} $, & $ 0.7790]$  \\
		 $I_{R_2}$ & $[3\times 10^{-5} $, & $ 0.0023]$   \\
	\end{tabular}
\end{center}
\caption{Extremal values of the indicators for different values of the threshold parameter $\delta$.}
\label{Tab:indicators}
\end{table}
\begin{table}[h!]
\color{black}       \begin{center}
\begin{tabular}{cl|rrrr|rr}
& &$\mu_-^{\text{LB}}$ & $\mu_-^{\text{UB}}$ & $\mu_+^{\text{LB}}$ & $\mu_+^{\text{UB}}$ & $n_{it}$&CPU\\
\hline
$\delta = 10^{-3}$ &Bounds &-1.4308  & -0.0012    &0.0101  &  1.6958 & 248  & 1.63\\
& Exact Eigvs & -1.0573 & -0.3113 & 0.0108 &  1.4741 \\ \hline
 $\delta = 10^{-4}$ &Bounds&-1.4221  & -0.0012    &0.0424  &  1.6706 & 132  & 1.49\\ 
& Exact Eigvs &-1.0573 & -0.3113 & 0.0455 &  1.4787 
\\ \hline
$\delta = 10^{-5}$ &Bounds&  -1.3693 &  -0.0012  &  0.2565  &  1.6859  & 83 & 2.06\\
& Exact Eigvs &-1.0573 & -0.3113 & 0.2691 &  1.4856 
\\ \hline
$\delta = 10^{-6}$ &Bounds&  -1.2436  & -0.0012 &   0.9601   & 1.5241 & 64 & 2.90 \\             
& Exact Eigvs &-1.0573 & -0.3113 & 0.9629 &  1.4843 
\\
\end{tabular} 
\end{center}
 \caption{3D cantilever beam problem on a unit cube: eigenvalue bounds vs real eigenvalues. The number of MINRES iterations $n_{it}$ and the CPU time (in seconds) of the iterative procedure, with different approximations of the $(1,1)$ block $A$, are also provided.
        }
        \label{Bounds_and_its}
\end{table}
}

\textcolor{black}
{
From Table \ref{Bounds_and_its} we see that: (i) The accuracy on the approximation of the $(1,1)$ block
influences mostly the smallest positive eigenvalue, which is well captured by our bounds; (ii) all in all the bounds
correctly describe the extremal eigenvalues, with the only exception of the largest negative eigenvalue, for which the bound is looser, as already observed in Section 6. 
}
\section{Conclusions}
\label{Sec8}
We analyzed spectral bounds for the application of block diagonal approximated Schur complement preconditioners to symmetric multiple saddle-point systems. The analysis is based on recursively defined polynomials, the zeros of which coincide with the eigenvalues of interest. We established general bounds for the spectrum in the case of approximated preconditioners. The results obtained demonstrate the consistency of the theoretical predictions with numerically computed eigenvalues, on synthetic test cases, as well as in the solution of a realistic problem arising from the discretization of a PDE modeling a 3D poroelasticity equation.
%
\subsection*{Acknowledgements}
The work of AM was carried out within the PNRR research activities of the consortium iNEST (Interconnected North-Est Innovation Ecosystem) funded by the European Union Next-GenerationEU
(PNRR -- Missione 4 Componente 2, Investimento 1.5 -- D.D. 1058 23/06/2022, ECS$\_$00000043). LB acknowledges financial support under the PNRR research activity, Mission 4, Component 2, Investment 1.1,
funded by the EU Next-GenerationEU -- \#2022AKNSE4\_005 (PE1) CUP C53D2300242000.
This manuscript reflects only the Authors' views and opinions, neither the European Union nor the European Commission can be considered responsible for them.   LM and AM are members of the INdAM research group GNCS.

\end{document}